\setlist[enumerate]{
label=\textnormal{({\roman*})},
ref={\roman*}}
\def\th@plain{%
  \thm@notefont{}
  \itshape 
}
\def\th@definition{%
  \thm@notefont{}
  \normalfont 
}
\newtheorem*{rep@theorem}{\rep@title}
\newcommand{\newreptheorem}[2]{%
\newenvironment{rep#1}[1]{%
 \def\rep@title{#2 \ref{##1}}%
 \begin{rep@theorem}}%
 {\end{rep@theorem}}}
\newtheorem{theorem}{Theorem}[section]
\newtheorem{lemma}[theorem]{Lemma}
\newtheorem{proposition}[theorem]{Proposition}
\newtheorem{corollary}[theorem]{Corollary}
\theoremstyle{remark}
\newtheorem*{remark}{Remark}
\theoremstyle{definition}
\newenvironment{example}
  {\pushQED{\qed}\examplex}
  {\popQED\endexamplex}
\newenvironment{definition}
  {\pushQED{\qed}\definitionx}
  {\popQED\enddefinitionx}
\newcommand*{\da@rightarrow}{\mathchar"0\hexnumber@\symAMSa 4B }
\newcommand*{\da@leftarrow}{\mathchar"0\hexnumber@\symAMSa 4C }
\newcommand*{\xdashrightarrow}[2][]{%
  \mathrel{%
    \mathpalette{\da@xarrow{#1}{#2}{}\da@rightarrow{\,}{}}{}%
  }%
}
\newcommand{\xdashleftarrow}[2][]{%
  \mathrel{%
    \mathpalette{\da@xarrow{#1}{#2}\da@leftarrow{}{}{\,}}{}%
  }%
}
\newcommand*{\da@xarrow}[7]{%
  \sbox0{$\ifx#7\scriptstyle\scriptscriptstyle\else\scriptstyle\fi#5#1#6\m@th$}%
  \sbox2{$\ifx#7\scriptstyle\scriptscriptstyle\else\scriptstyle\fi#5#2#6\m@th$}%
  \sbox4{$#7\dabar@\m@th$}%
  \dimen@=\wd0 %
  \ifdim\wd2 >\dimen@
    \dimen@=\wd2 %
  \fi
  \count@=2 %
  \def\da@bars{\dabar@\dabar@}%
  \@whiledim\count@\wd4<\dimen@\do{%
    \advance\count@\@ne
    \expandafter\def\expandafter\da@bars\expandafter{%
      \da@bars
      \dabar@ 
    }%
  }%
  \mathrel{#3}%
  \mathrel{%
    \mathop{\da@bars}\limits
    \ifx\\#1\\%
    \else
      _{\copy0}%
    \fi
    \ifx\\#2\\%
    \else
      ^{\copy2}%
    \fi
  }%
  \mathrel{#4}%
}
\newcommand{\overrightharpoon}{%
  \mathpalette{\overarrow@\rightharpoonfill@}}
\def\rightharpoonfill@{\arrowfill@\relbar\relbar\rightharpoonup}
\newcommand{\osh}{\mathpalette{\overarrowsmall@\rightharpoonfill@}}
\def\rightharpoonfill@{\arrowfill@\relbar\relbar\rightharpoonup}
\newcommand{\overarrowsmall@}[3]{%
  \vbox{%
    \ialign{%
      ##\crcr
      #1{\smaller@style{#2}}\crcr
      \noalign{\nointerlineskip}%
      $\m@th\hfil#2#3\hfil$\crcr
    }%
  }%
}
\def\smaller@style#1{%
  \ifx#1\displaystyle\scriptstyle\else
    \ifx#1\textstyle\scriptstyle\else
      \scriptscriptstyle
    \fi
  \fi
}
\newcommand{\mylabel}[2]{#2\def\@currentlabel{#2}\label{#1}}
\DeclareMathOperator{\Eb}{\mathbb{E}} 
\DeclareMathOperator{\FV}{\textnormal{FV}} 
\DeclareMathOperator{\Gc}{\mathcal{G}} 
\DeclareMathOperator{\LV}{\textnormal{LV}} 
\DeclareMathOperator{\ousf}{\osh{\mathsf{USF}}} 
\DeclareMathOperator{\owusf}{\osh{\mathsf{WUSF}}} 
\DeclareMathOperator{\Pb}{\mathbb{P}} 
\DeclareMathOperator{\Rb}{\mathbb{R}} 
\DeclareMathOperator{\SF}{\osh{\textnormal{SF}}} 
\DeclareMathOperator{\Tb}{\mathbb{T}} 
\DeclareMathOperator{\Zb}{\mathbb{Z}} 
\title[Rotor walks and the wired spanning forest]{Rotor walks on transient graphs and the wired spanning forest}
\author{Swee Hong Chan}
 \thanks{Department of Mathematics, Cornell University. Partially supported by NSF grant DMS-1455272. Email: \url{sweehong@math.cornell.edu}.}
\begin{document}

\begin{abstract}
We study rotor walks on transient graphs  with initial rotor configuration  sampled from the oriented wired uniform spanning forest (OWUSF) measure.
We show that the expected number of visits to any vertex by the rotor walk is at most equal to the expected number of visits by the simple random walk.
In particular, this implies that this  walk is transient.
When these two numbers coincide,
we show that the  rotor configuration at the end of the process  also has the law of OWUSF.
Furthermore,  if the  graph is vertex-transitive,  we show that the average number of visits  by $n$ consecutive rotor walks converges to the Green function of the simple random walk as $n$ tends to infinity.
This answers a question posed by Florescu, Ganguly, Levine, and Peres (2014).
\end{abstract}

\keywords{rotor walk, rotor-router, uniform spanning forest, wired spanning forest, stationary distribution, transience and recurrence}

\subjclass[2010]{05C81, 82C20} 

\maketitle

\section{Introduction}\label{section: intro}
In a  \emph{rotor walk}~\cite{WLB96,PDDK96, Propp03}  on a graph $G$,
each vertex  is assigned a fixed cyclic ordering of its neighbors, and  each vertex has a \emph{rotor}, which is an arrow that points to one of its neighbors.
A \emph{rotor configuration} is an assignment of  directions to all the rotors.
Given an initial rotor configuration,
a walker (initially located at a fixed vertex) explores the graph using the following rule:
at each time step, the walker changes the rotor of its current location to point to the next neighbor given by the  cyclic ordering, and then the walker moves to this new neighbor.
The rotor walk is obtained by repeated applications of this rule.

One major difference of this paper compared to other works in the literature is our choice of  initial rotor configuration;
it is sampled from  the {oriented wired uniform spanning forest} measure.
   Let $G$ be a connected graph that is simple (i.e. no loops or multiple edges), transient, 
  and locally finite (i.e. every vertex has finite degree), and let
 $W_1\subseteq W_2 \subseteq \ldots$ be finite connected subsets of $V(G)$  such that $\bigcup_{R=1}^\infty W_R=V(G)$.
Let $G_R$ be obtained from $G$ by identifying all vertices outside $W_R$ to one new vertex $w_R$, and let $\mu_R$ be the uniform measure on spanning trees of $G_R$ oriented toward $w_R$.
Then $\mu_R$ has a unique infinite volume limit \cite{Pem91,BLPS01}, which 
we call the \emph{wired spanning forest oriented toward infinity} $\owusf(G)$.
See \cite{BLPS01,LP16} for more details.

Several studies had been conducted to   compare  the behavior of rotor walks to the expected behavior of simple random walks (e.g.\cite{CDST06,CS06,LL09,LP09,HMSH15,HSH18}).
One such  result is due to Schramm \cite[Theorem~10]{HP10}, who showed that the rotor walk is in a certain sense at most as transient as the simple random walk.
We will show  that that the opposite is true when the initial rotor configuration is given by $\owusf(G)$, in a manner to be made precise.

One way to measure the transience of rotor walks is to count the number of visits to any vertex.
Fix a vertex $a$ as the  initial location of the walker.
Let $u(\rho,x)$ be the number of visits to $x \in V(G)$ by the rotor walk with initial rotor configuration $\rho$,
and let $\Gc(x)$ be the expected number of visits to $x$ by the simple random walk.
Note that $\Gc(x)$ is finite since the graph $G$ is transient.

\begin{theorem}\label{theorem: odometer is bounded above by Green function}
Let $G$ be a simple connected graph that is locally finite and transient.
Consider any rotor walk on $G$ with the walker initially located at a fixed vertex $a$.
Then, 
\begin{equation}\label{equation: odometer is bounded above by Green function}
\Eb_{\rho}[u(\rho,x)]\leq \Gc(x) \qquad \forall \ x \in V(G), 
\end{equation}
where $\rho$ is sampled from $\owusf(G)$.
\end{theorem}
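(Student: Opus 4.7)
The plan is to derive a discrete Laplacian inequality for $\Eb[u(\rho, v)]$ and invoke a maximum principle against $\Gc$, passing through finite truncations $G_R$ of $G$. Letting $N_{w, v}(\rho)$ count rotor-walk traversals of the oriented edge $(w, v)$, walker conservation gives $u(\rho, v) = \mathbbm{1}[v = a] + \sum_{w \sim v} N_{w, v}(\rho)$. Because the rotor at $w$ cycles once per visit, writing $\rho'(w)$ for the terminal rotor at $w$ and $r_w := u(\rho, w) \bmod d_w$, one has $N_{w, v}(\rho) = u(\rho, w)/d_w + \delta_{w, v}$ with $\delta_{w, v} = \mathbbm{1}[v \in \mathrm{arc}(\rho(w), \rho'(w))] - r_w/d_w$, where the arc is the open cyclic arc of length $r_w$ starting at $\rho(w)$. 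Setting $\Delta(\rho, v) := \sum_{w \sim v} \delta_{w, v}$, I obtain
\[
u(\rho, v) = \mathbbm{1}[v = a] + \sum_{w \sim v} \frac{u(\rho, w)}{d_w} + \Delta(\rho, v).
\]

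The core claim will be $\Eb_{\owusf}[\Delta(\rho, v)] \leq 0$ for every $v$. Working on $G_R$ with sink $w_R$ and $\mu_R$ the uniform law on spanning trees oriented toward $w_R$, I will use the classical observation (Holroyd--Propp) that the map $\rho \mapsto \rho'$ is a bijection on the support of $\mu_R$, so $\rho' \sim \mu_R$. The structural heart of the proof will be a rotational symmetry: for every non-sink $w$ and all neighbors $v, v'$ of $w$,
\[
\Pb_{\mu_R}\bigl[v \in \mathrm{arc}(\rho(w), \rho'(w))\bigr] = \Pb_{\mu_R}\bigl[v' \in \mathrm{arc}(\rho(w), \rho'(w))\bigr].
\]
This will force $\Eb_{\mu_R}[\delta_{w, v}] = 0$ for non-sink $w$, while the sink contributes $\delta_{w_R, v} = -1/d_{w_R} \leq 0$ since the walker is absorbed at $w_R$ (and the rotor there never advances). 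Summing over $w \sim v$ yields $\Eb_{\mu_R}[\Delta(\rho, v)] \leq 0$.

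Once this inequality is in hand, $f_R(v) := \Gc_R(v) - \Eb_{\mu_R}[u_R(\rho, v)]$ is superharmonic on $W_R \setminus \{a\}$ with $f_R(w_R) = 0$, so $f_R \geq 0$ by the maximum principle. To pass to the infinite graph I would use $\Gc_R \uparrow \Gc$, $\mu_R \Rightarrow \owusf$, and the no-cycle property of $\owusf$ (which ensures the rotor walk on $G$ is transient and $u(\rho, v) < \infty$ almost surely), together with a coupling between the rotor walk on $G$ stopped at first exit from $W_R$ and the rotor walk on $G_R$ absorbed at $w_R$, so that the finite-volume inequality survives the limit.

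The hard part will be the rotational symmetry: it does not follow from marginal invariance of $\mu_R$ alone but requires controlling the joint law of $(\rho(w), \rho'(w))$ and how the rotor-walk rotation at $w$ interacts with the uniform spanning tree structure, for which I expect the cleanest route to be a bijective or cycle-popping argument derived from Wilson's algorithm rooted at $w$. A secondary difficulty is that the walker on $G$ may revisit $W_R$ after exiting it, so $u(\rho, x) \geq u_R(\rho, x)$ in the wrong direction; the coupling must therefore be set up via first-exit stopping on $G$ combined with a careful comparison of the $\owusf$ marginal on $W_R$ with $\mu_R$ to ensure that the weak convergence transfers the bound.
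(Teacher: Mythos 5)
Your overall skeleton — reduce to finite truncations $G_R$, establish the equality $\Eb_{\mu_R}[u_R]=\Gc_R$ there, and pass to the limit — matches the paper's strategy in broad outline, but the two key steps you propose to execute differently both have real gaps, and the paper's route avoids them entirely.

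\emph{The finite-graph step.} You want to prove $\Eb_{\mu_R}[u_R]=\Gc_R$ (equivalently $\Eb[\Delta(\rho,v)]=0$ for non-sink $v$) by establishing the ``rotational symmetry'' $\Pb_{\mu_R}[v \in \mathrm{arc}(\rho(w),\rho'(w))]=\Pb_{\mu_R}[v' \in \mathrm{arc}(\rho(w),\rho'(w))]$. This is strictly stronger than what you need, it does not follow from marginal uniformity of $\rho(w)$ and $\rho'(w)$ (the arc is determined by the \emph{joint} law of $(\rho(w), r_w)$ and these are correlated), and I do not see how to prove it; you yourself flag it as ``the hard part.'' The paper never needs a pointwise symmetry. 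Instead it proves (Lemma~\ref{lemma: odometer is harmonic for finite graphs}) that the long-run average $\lim_n S_n(\rho,x)/n$ equals $\Gc(x)$ \emph{deterministically} for every $\rho$, using exactly the edge-balance identity $S_m(\rho,y,x)=S_m(\rho,y)/\deg(y)$ on a full period, and then invokes the Holroyd--Propp stationarity of $\ousf(G_R,Z_R)$ (Proposition~\ref{proposition: stationarity of wsf for finite graphs}) to pull that time-average back to the one-step expectation $\Eb_{\mu_R}[u_R(\rho,x)]$. That sidesteps the arc symmetry entirely. Your decomposition of $N_{w,v}$ is correct, but the weaker fact $\Eb_{\mu_R}[N_{w,v}]=\Eb_{\mu_R}[u(\rho,w)]/d_w$ is itself essentially what you are trying to prove, so the argument is circular unless you can supply the arc symmetry. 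Separately, your claim that the sink contributes $\delta_{w_R,v}=-1/d_{w_R}$ is mistaken under the paper's convention: $u(\rho,w_R)=0$ (the odometer counts visits strictly before hitting $Z$) and $N_{w_R,v}=0$, so $\delta_{w_R,v}=0$; the sink supplies the Dirichlet boundary condition $\Eb[u_R(\rho,z)]=\Gc_R(z)=0$ for $z\in Z_R$, not a sign of the Laplacian, and on the finite graph the result is equality, not an inequality.

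\emph{The limit step.} You correctly notice that $u_{G,\varnothing}\geq u_{G,Z_r}$ goes ``the wrong way,'' but your fix (first-exit coupling plus ``careful comparison'') is left vague, and a bare maximum principle for $\Gc-\Eb[u]$ on an infinite transient graph has no boundary to anchor it. The paper's resolution is to keep \emph{two} truncation parameters. For fixed $r$, the odometer $u_{G,Z_r}$ is a local function of finitely many rotors, so weak convergence $\mu_R\Rightarrow\owusf$ gives $\Eb_\rho[u_{G,Z_r}]=\lim_R\Eb_{\rho_R}[u_{G_R,Z_r}]$. Then $u_{G_R,Z_r}\leq u_{G_R,Z_R}$ (enlarging the sink only adds visits), and the finite equality gives $\Eb_{\rho_R}[u_{G_R,Z_R}]=\Gc_{G_R,Z_R}\leq\Gc_{G,\varnothing}$. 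Finally $u_{G,Z_r}\uparrow u_{G,\varnothing}$, and monotone convergence delivers the inequality. The inequality in the theorem is \emph{created} by this two-parameter monotone passage; it is not present at any finite level. If you want to salvage your superharmonicity route, you would need the boundary behavior $\Eb[u(\rho,v)]\to 0$ along some exhaustion, which is not obvious and, in the paper's framework, is what the monotone-convergence argument replaces.
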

We prove  Theorem~\ref{theorem: odometer is bounded above by Green function} by first proving an analogous statement for finite graphs,
and the statement for infinite graphs then follows by taking the infinite volume limit.

One consequence of Theorem~\ref{theorem: odometer is bounded above by Green function} is that  the rotor walk with initial rotor configuration picked from $\owusf(G)$ is transient (i.e., every vertex is visited only finitely many times) almost surely.
We remark that the rotor walk with an arbitrary initial rotor configuration can fail to be transient even if the underlying graph is transient; see \cite[Theorem~2]{AH12}.

Note that  the inequality in \eqref{equation: odometer is bounded above by Green function} can be strict, as shown in Figure~\ref{figure: odometer bound strict inequality} with $G$ 
being  a transient tree with an extra infinite path attached to the root.
Somewhat surprisingly,  having  equality in \eqref{equation: odometer is bounded above by Green function} turns out to have the following interesting implication.

\begin{figure}[ht!]
\centering
\begin{tabular}{c  }
\includegraphics[scale=1]{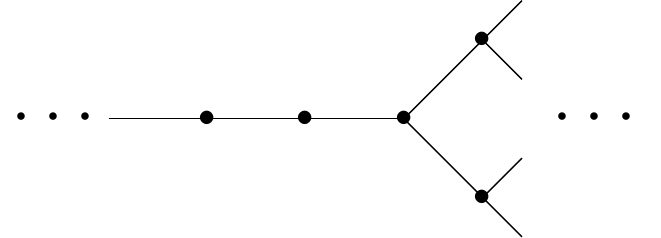}  \\
(a) 
\\ \vspace{0.1 cm}\\
\includegraphics[scale=1]{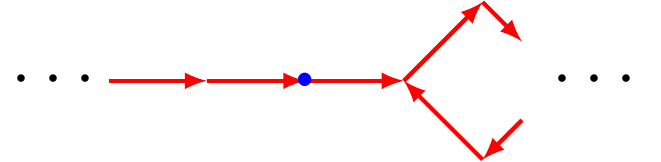}  \\
(b)\\
\\ \vspace{0.1 cm}\\
\includegraphics[scale=1]{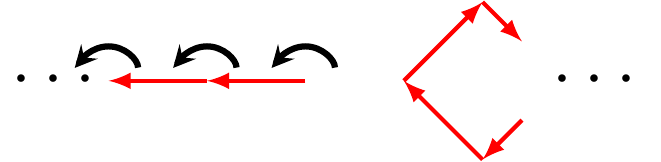}  \\
(c)
\end{tabular}
\caption{(a) The $2$-ary tree with an extra infinite path attached to its root.
 (b) An initial rotor configuration $\rho$ sampled from $\owusf(G)$, and a walker at the initial location $a$, marked with a (blue) bullet.
 The rotors of $\rho$ in the extra infinite path form a path oriented toward $a$ almost surely 
 by Wilson's method~\cite{BLPS01}.
 (c) The final rotor configuration $\sigma(\rho)$ after the rotor walk is performed.
 The number of visits $u(\rho,a)$ to $a$ is equal to 1 almost surely as $a$ is visited only once (i.e., at the beginning at the walk),
 while  the Green function $\Gc(a)$ is equal to $3$ (see \cite[Exercise~2.8]{LP16}).
 Furthermore, $\rho$ and $\sigma(\rho)$ follow different laws
 as the former has an infinite path oriented toward $a$ almost surely 
 while the latter has the same  path oriented outward of $a$ almost surely.
 }
 \label{figure: odometer bound strict inequality}
\end{figure}

Let $\rho$ be a rotor configuration such that the corresponding rotor walk is transient.
Then  the \emph{final rotor configuration} $\sigma(\rho)$  is given by  $\sigma(\rho)(x):=\lim_{t \to \infty} \rho_t(x)$.
Here $\rho_t$ denotes the rotor configuration at the $t$-th step of the rotor walk.
Note that the limit $\lim_{t \to \infty} \rho_t(x)$ exists as the sequence is eventually constant.
A probability measure $\mu$ on rotor configurations is \emph{stationary with respect to the rotor walk}
if $\rho \overset{d}{=} \mu$ implies $\sigma(\rho)\overset{d}{=}\mu$.

\begin{theorem}\label{theorem: two TFAE}
Let $G$ be a simple connected graph that is locally finite and transient.
Consider any rotor walk on $G$ with the walker initially located at a fixed vertex $a$.
Let $\rho$ be sampled from $\owusf(G)$.
Then, the following are equivalent:
\begin{enumerate}
\item $\owusf(G)$ is stationary with respect to the rotor walk; 
\item We have $\Eb_{\rho}[u(\rho,x)]= \Gc(x)$ for all $x \in V(G)$.
\end{enumerate}
\end{theorem}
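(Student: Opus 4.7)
The plan is to bootstrap both implications from the corresponding statements on the finite graphs $G_R$ of the wired exhaustion. On each $G_R$, the classical Propp--Holroyd theory gives that if $\rho_R$ is sampled uniformly from spanning trees of $G_R$ oriented toward the sink $w_R$, then the rotor walk from $a$ to $w_R$ satisfies both $\sigma(\rho_R) \overset{d}{=} \rho_R$ and $\Eb[u_R(\rho_R, x)] = \Gc_R(x)$ for every $x \in V(G_R)$, where $\Gc_R$ denotes the Green function of simple random walk on $G_R$ killed at $w_R$. By construction, $\rho_R \xrightarrow{d} \rho \sim \owusf(G)$ and $\Gc_R(x) \uparrow \Gc(x)$ as $R \to \infty$.

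For direction (ii) $\Rightarrow$ (i), I would promote $\rho_R \xrightarrow{d} \rho$ to an almost sure coupling, for instance via Wilson's algorithm rooted at $w_R$ versus rooted at infinity. Under this coupling, the finite and infinite trajectories initially agree inside $W_R$, yielding a natural domination $u_R(\rho_R, x) \leq u(\rho, x)$ for $x \in W_R$. The hypothesis $\Eb[u(\rho, x)] = \Gc(x)$ together with $\Eb[u_R(\rho_R, x)] = \Gc_R(x) \to \Gc(x)$ then upgrades this to $L^1$ convergence of the odometers. Since each final rotor $\sigma(\rho)(y)$ is determined by the pair $(\rho(y), u(\rho, y) \bmod \deg(y))$, this forces $\sigma(\rho_R) \xrightarrow{d} \sigma(\rho)$; combining with $\sigma(\rho_R) \overset{d}{=} \rho_R \xrightarrow{d} \rho$ yields $\sigma(\rho) \overset{d}{=} \rho$, which is precisely (i).

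For direction (i) $\Rightarrow$ (ii), I would begin with the conservation identity
\begin{equation*}
\Eb_\rho[u(\rho, x)] - \mathbbm{1}_{x = a} = \sum_{y \sim x} \Eb_\rho[N_{y, x}(\rho)],
\end{equation*}
where $N_{y, x}(\rho)$ counts the traversals of the directed edge from $y$ to $x$. The deterministic rotor rule gives $N_{y, x}(\rho) = u(\rho, y)/\deg(y) + \varepsilon_{y, x}(\rho)$ with $|\varepsilon_{y, x}| \leq 1$ depending only on the cyclic position of $x$ relative to the pair $(\rho(y), \sigma(\rho)(y))$. Stationarity $\sigma(\rho) \overset{d}{=} \rho$ is exactly the input needed to force $\sum_{y \sim x} \Eb[\varepsilon_{y, x}(\rho)] = 0$ for every $x$. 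The nonnegative function $f(x) := \Gc(x) - \Eb_\rho[u(\rho, x)]$, which is $\geq 0$ by Theorem~\ref{theorem: odometer is bounded above by Green function}, therefore satisfies the same harmonic equation as $\Gc$ off $a$; combined with decay at infinity extracted from the finite-graph identities, a maximum-principle argument forces $f \equiv 0$.

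The main obstacle I anticipate is direction (i) $\Rightarrow$ (ii): stationarity controls only the marginal law of $\sigma(\rho)$, whereas each $\varepsilon_{y, x}$ depends on the \emph{joint} law of $(\rho(y), \sigma(\rho)(y))$. Bridging this gap---so that the edgewise corrections genuinely sum to zero around every star---will likely require either an iteration argument (running the rotor walk many times and using the abelian property to average joint correlations against the stationary marginal) or a mass-transport-style identity that pairs $\varepsilon_{y, x}$ with the analogous contribution coming from the time-reversed rotor walk.
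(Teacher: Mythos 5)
Your proposal diverges from the paper's proof in both directions, and each divergence has a concrete gap.

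For (i)~$\Rightarrow$~(ii), you correctly identify the obstacle yourself: stationarity controls only the marginal of $\sigma(\rho)$, while each $\varepsilon_{y,x}$ depends on the joint law of $(\rho(y),\sigma(\rho)(y))$, so the claimed cancellation $\sum_{y\sim x}\Eb[\varepsilon_{y,x}]=0$ does not follow. The paper sidesteps this entirely by iterating the walk: since $\owusf(G)$ is rotor walk stationary, $\Eb_\rho[u(\rho,x)]=\Eb_\rho[S_n(\rho,x)/n]$ for every $n$, and Fatou's lemma together with Schramm's lower bound $\liminf_n S_n(\rho,x)/n\geq\Gc(x)$ (Lemma~\ref{lemma: Schramm bound}) gives $\Eb_\rho[u(\rho,x)]\geq\Gc(x)$, which combined with Theorem~\ref{theorem: odometer is bounded above by Green function} finishes the direction in a few lines. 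Your suggestion of ``running the rotor walk many times'' to average out the edgewise corrections is in fact the right instinct, but once you do that you arrive at the occupation-rate quantity $S_n/n$, and the clean route is to feed it through Fatou rather than trying to prove a harmonic identity for $\Eb_\rho[u(\rho,x)]$ itself.

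For (ii)~$\Rightarrow$~(i), the claimed domination $u_R(\rho_R,x)\leq u(\rho,x)$ under a Wilson coupling is not justified and I do not believe it can be made to work directly. The difficulty is that you are comparing rotor walks on different graphs with \emph{different} rotor configurations: the Wilson coupling gives $\rho_R$ and $\rho$ agreeing on any fixed finite set of edges with probability tending to $1$, but not on all of $W_R$ (and indeed they cannot agree there in general, since $\rho_R$ is constrained to be a spanning tree of $G_R$ rooted at $w_R$). Once the walk on $G_R$ exits the region where the two configurations coincide, the trajectories decouple and there is no monotone comparison. The paper instead works on the single finite graph $G_R$ and compares the rotor walk killed at $Z_r$ to the one killed at $Z_R$ with the \emph{same} configuration $\rho_R$; this is a genuine monotonicity in the sink. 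The hypothesis (ii) is then used to show that the expected number of visits to a fixed ball $B_s$ by the two killed walks differ by at most $\epsilon$, which is exactly the intermediate tightness condition \ref{item: S3} of Theorem~\ref{theorem: three TFAE}; from there, (iii)~$\Rightarrow$~(i) follows by observing that on the high-probability event that the walker never returns to $B_s$ after hitting $Z_r$, the final rotors in $B_s$ are the same whether the walk is killed at $Z_r$ or at $Z_R$, and the latter has law $\ousf(G_R,Z_R)$ by Proposition~\ref{proposition: stationarity of wsf for finite graphs}. You would need to introduce something like this intermediate control on the range of the walk to make your argument rigorous; the last step of your sketch (from $L^1$ convergence of odometers to $\sigma(\rho_R)\xrightarrow{d}\sigma(\rho)$) also implicitly relies on the walk not revisiting a fixed ball after leaving it, which again is condition \ref{item: S3} in disguise.
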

The proof of Theorem~\ref{theorem: two TFAE} uses an idea similar to Theorem~\ref{theorem: odometer is bounded above by Green function};
we first show an analogous statement for the finite graphs, and then we take the infinite volume limit.
This  limit needs to be taken over a sequence of random variables that are tight (as otherwise the equality in (ii)  will we weakened to an inequality), and 
this tightness condition turns out to be equivalent to requiring the stationarity of $\owusf(G)$.
A more detailed sketch is provided in Section~\ref{section: rotor walk stationarity}.

See Proposition~\ref{proposition: tree stationarity} 
for graphs  for which $\owusf(G)$ is  stationary with respect to the rotor walk.  
Those examples include the  $b$-ary tree $\Tb_b$ for $b\geq 2$
(i.e. a tree with a root vertex $o$ having  degree $b$ and with every other vertex having degree $b+1$).
For the other end of the spectrum,
see Figure~\ref{figure: odometer bound strict inequality} for  a graph for which $\owusf(G)$ is not stationary.
We remark that the  stationarity of $\owusf(\Zb^d)$ for rotor walks on $\Zb^d$ ($d \geq 3$) remains an open problem; see Section~\ref{section: open problems}.

Another way to measure the transience of  rotor walks is the following method introduced by Florescu, Ganguly, Levine, and Peres (FGLP)~\cite{FGLP14}:
Start with an initial rotor configuration and with $n$ walkers located at the fixed  vertex $a$.
Let each of these $n$ walkers in turn perform rotor walk
(note that we do not reset the rotors in between runs!).
Let $S_n(\rho,x)$ be equal to  the total number of visits to $x$ by  all the walkers if all of the $n$  rotor walks  are transient, and is equal to infinity otherwise.
 The \emph{occupation rate} $S_n(\rho,x)/n$ satisfies the following inequality, 
\begin{equation}\label{equation: Schramm bound}
 \liminf_{n \to \infty} \frac{S_n(\rho,x)}{n}\geq  \Gc(x).  
\end{equation}
The proof of \eqref{equation: Schramm bound} for  when  $x$ is equal to the initial vertex $a$
 is due to Schramm (see  \cite[Theorem~10]{HP10} and \cite[Section~2]{FGLP14}).
  Note that Schramm  stated \eqref{equation: Schramm bound} in terms of the escape rate of the rotor walk, which is inversely proportional to ${S_n(\rho,a)}/{n}$; see \cite[Lemma~5]{FGLP14}.
   We include a proof of \eqref{equation: Schramm bound} in  this paper for completeness; see Lemma~\ref{lemma: Schramm bound}. 

%


The inequality in \eqref{equation: Schramm bound} can be strict; see \cite[Theorem~2(iii)]{AH11}.
FGLP then asked for the next best thing:  must there always exist a rotor configuration for which equality occurs in \eqref{equation: Schramm bound}?
We give a positive answer to a weaker probabilistic variant of this question.

\begin{theorem}\label{theorem: Schramm bound convergence in norm}
Let $G$ be a simple connected graph that is locally finite, transient, and vertex-transitive.
Consider any rotor walk on $G$ with the walker initially located at a fixed vertex $a$.
Let $\rho$ be sampled from $\owusf(G)$.
Then  occupation rates $S_n(\rho,x)/n$ converge in norm to $\Gc(x)$,
i.e.,
\[  \lim_{n \to \infty} \Eb_{\rho} \left[ \left|\frac{S_n(\rho,x)}{n}-\Gc(x)\right| \right] = 0 \qquad \forall x \in V(G).\]
\end{theorem}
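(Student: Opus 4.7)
My plan is to sandwich $\Eb[S_n(\rho,x)/n]$ between the almost sure Schramm lower bound and a matching expectation upper bound, and then to upgrade convergence in expectation to $L^1$ convergence via a short dominated-convergence argument.

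For the lower bound, I would apply Fatou's lemma to the almost sure inequality $\liminf_n S_n(\rho,x)/n\ge\Gc(x)$ from Lemma~\ref{lemma: Schramm bound}; this yields $\liminf_n\Eb[S_n/n]\ge\Gc(x)$.

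The heart of the argument is the matching upper bound $\limsup_n\Eb[S_n/n]\le\Gc(x)$. I would decompose $S_n(\rho,x)=\sum_{k=0}^{n-1}u(\rho_k,x)$, where $\rho_k$ denotes the rotor configuration just before the $(k+1)$th walker leaves $a$, so $\rho_0=\rho\sim\owusf(G)$; the aim is then to establish
\[
\Eb\bigl[u(\rho_k,x)\bigr]\le\Gc(x)\qquad\text{for every }k\ge 0,
\]
which upon averaging over $k$ delivers the desired $\Eb[S_n/n]\le\Gc(x)$. For $k=0$ this is Theorem~\ref{theorem: odometer is bounded above by Green function}. For $k\ge 1$ the law of $\rho_k$ is generally not $\owusf(G)$, and I would re-establish the bound by revisiting the finite-volume proof of Theorem~\ref{theorem: odometer is bounded above by Green function}: on each $G_R$ the measure $\mu_R$ is stationary for the rotor walk, so the iterate $\rho_k^R$ has the same distribution as $\rho_0^R$ and therefore the finite analogue gives $\Eb[u^R(\rho_k^R,x)]=\Gc_R(x)$ for every $k$. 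Vertex-transitivity of $G$ would then enter in the limit $R\to\infty$: translation invariance of both $\owusf(G)$ and of $\Gc$, together with a mass-transport style spatial averaging, should provide the tightness needed to pass from the finite-graph equality to the infinite-graph inequality without leaking mass across the boundary of $W_R$. The main obstacle is precisely this passage to the limit uniformly in $k$: as the rotor walks iterate, correlations between $\rho_k$ and the trajectories of the previous walkers accumulate, and without vertex-transitivity the strict inequality of Theorem~\ref{theorem: odometer is bounded above by Green function} can persist indefinitely along the iteration (as in the example of Figure~\ref{figure: odometer bound strict inequality}), so all of the work goes into using the symmetry to prevent such blow-up uniformly in $k$.

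Finally, once $\Eb[S_n/n]\to\Gc(x)$ is in hand, I would finish via
\[
\Eb\bigl[|S_n/n-\Gc(x)|\bigr]=\bigl(\Eb[S_n/n]-\Gc(x)\bigr)+2\,\Eb\bigl[(\Gc(x)-S_n/n)_+\bigr].
\]
The first summand tends to zero by the two-sided bounds just obtained; the second involves a sequence bounded by the constant $\Gc(x)$ that tends to $0$ almost surely by Lemma~\ref{lemma: Schramm bound}, so the dominated convergence theorem yields the claim.
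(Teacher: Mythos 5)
Your overall strategy matches the paper's: pair the expectation upper bound $\Eb[S_n(\rho,x)/n]\le\Gc(x)$ with the almost-sure lower bound of Lemma~\ref{lemma: Schramm bound}, then combine. Your step three is in fact a cleaner way to combine them than the paper's argument: the paper splits on the event $A_{n,\epsilon}=\{S_n/n\ge\Gc(x)-\epsilon\}$ and chases inequalities, while your identity $\Eb[|S_n/n-\Gc(x)|]=(\Eb[S_n/n]-\Gc(x))+2\Eb[(\Gc(x)-S_n/n)_+]$ together with dominated convergence (the second summand is bounded by $\Gc(x)$ and tends to $0$ a.s.\ by Lemma~\ref{lemma: Schramm bound}) is shorter and more transparent. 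Indeed, since $\Eb[S_n/n]-\Gc(x)\le0$, the first summand is nonpositive, so the whole expression is bounded above by the second; you do not even need the Fatou lower bound on $\Eb[S_n/n]$ for this conclusion.

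Your account of the upper bound, however, contains a genuine misconception, even though the underlying idea is the paper's own. What you are reconstructing is exactly Proposition~\ref{proposition: n-th odometer is bounded above by Green function}, and that proposition requires \emph{no} vertex-transitivity: it is stated for any locally finite transient $G$, and its proof is the same monotone-convergence argument that proves Theorem~\ref{theorem: odometer is bounded above by Green function}. Concretely, for fixed $n$ one does not control each $\Eb[u(\sigma^k\rho,x)]$ separately or ``uniformly in $k$''; one controls the single quantity $S_{G,Z_r,n}$, which depends only on the finitely many rotors in $B_r$, invokes $\Eb_{\rho_R}[S_{G_R,Z_R,n}]=n\Gc_{G_R,Z_R}(x)$ (from finite-graph stationarity of $\ousf(G_R,Z_R)$), notes visits only increase as the sink recedes, and lets $r\to\infty$ by monotone convergence. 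There is no boundary-leakage or tightness issue to patch, no accumulation-of-correlations obstruction, and no role for mass transport or translation invariance; the ``strict inequality persisting along the iteration'' that you worry about is perfectly compatible with $\Eb[S_n/n]\le\Gc(x)$ and is already ruled in by Lemma~\ref{lemma: Schramm bound}. (In fact, your argument, once corrected, proves the theorem without the vertex-transitivity hypothesis, as does the paper's own.) As written, the passage ``Vertex-transitivity\ldots should provide the tightness\ldots'' is a gap: it gestures at a nonexistent difficulty without supplying an argument. Replace it with a citation of Proposition~\ref{proposition: n-th odometer is bounded above by Green function} (or with the monotone-convergence limit just described), and the proof is complete.
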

The proof of  Theorem~\ref{theorem: Schramm bound convergence in norm} is derived from  
an upper bound for the expected value of occupation rates that holds  if $\rho \overset{d}{=} \owusf(G)$ and 
the lower bound for occupation rates from \eqref{equation: Schramm bound} that holds for any $\rho$.

When the underlying graph is vertex-transitive, we can upgrade the convergence in norm in Theorem~\ref{theorem: Schramm bound convergence in norm} to the almost sure convergence
and 
 gives a positive answer to the question of FGLP.


\begin{theorem}\label{theorem: Schramm bound for vertex-transitive graphs}
Let $G$ be a simple connected graph that is locally finite, transient, and vertex-transitive.
Consider any rotor walk on $G$ with the walker initially located at a fixed vertex $a$.
Then, for almost every $\rho$ picked from $\owusf(G)$,
\[  \lim_{n \to \infty} \frac{S_n(\rho,x)}{n}=\Gc(x) \qquad \forall \ x \in V(G).\]
\end{theorem}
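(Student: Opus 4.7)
By the Schramm bound \eqref{equation: Schramm bound}, which is a deterministic statement holding for every rotor configuration $\rho$, we already have $\liminf_n S_n(\rho,x)/n \ge \Gc(x)$ for all $\rho$, and in particular almost surely under $\owusf(G)$. Thus the task reduces to promoting the in-norm convergence of Theorem~\ref{theorem: Schramm bound convergence in norm} to a matching almost-sure upper bound $\limsup_n S_n(\rho,x)/n \le \Gc(x)$.

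The plan is to bootstrap from $L^1$ convergence to almost-sure convergence by combining two structural features: the monotonicity of $S_n(\rho,x)$ in $n$, since adding a walker on top of the previous $n{-}1$ walks can only increase the visit count at $x$; and the automorphism-invariance of $\owusf(G)$ afforded by vertex-transitivity.

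First, for each rational $\delta>0$, I would extract a subsequence $(n_k)_{k\ge 1}$, depending on $\delta$, with the properties that $S_{n_k}(\rho,x)/n_k \to \Gc(x)$ almost surely as $k\to\infty$ and that $n_{k+1}/n_k \le 1+\delta$ for all sufficiently large $k$. Once such a subsequence is in hand, the monotonicity of $S_n$ in $n$ gives the interpolation
\[
\frac{S_n(\rho,x)}{n} \;\le\; \frac{S_{n_{k+1}}(\rho,x)}{n_k} \;=\; \frac{n_{k+1}}{n_k}\cdot\frac{S_{n_{k+1}}(\rho,x)}{n_{k+1}} \qquad \text{for } n_k \le n \le n_{k+1},
\]
so passing to the limit yields $\limsup_n S_n(\rho,x)/n \le (1+\delta)\Gc(x)$ almost surely. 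Intersecting over a countable sequence $\delta\downarrow 0$ gives $\limsup_n S_n(\rho,x)/n \le \Gc(x)$ almost surely. Combining with the $\liminf$ bound from \eqref{equation: Schramm bound} and taking a countable intersection over $x\in V(G)$ finishes the proof.

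The main obstacle is the subsequence extraction in the first step: $L^1$ convergence by itself guarantees almost-sure convergence along \emph{some} subsequence but imposes no control on the gaps, and in general $L^1$ convergence together with monotonicity of $S_n$ does not force pointwise convergence (toy examples can be constructed). Here I expect vertex-transitivity to enter decisively, providing a quantitative tail estimate --- plausibly via a second-moment or concentration argument exploiting the automorphism-invariance of $\owusf(G)$ and the group-invariance of the rotor-walk dynamics --- strong enough that the tail probabilities $\Pb(|S_n(\rho,x)/n - \Gc(x)|>\epsilon)$ become summable along a geometric subsequence $n_k=\lfloor(1+\delta)^k\rfloor$, thereby enabling a Borel--Cantelli conclusion and closing the argument.
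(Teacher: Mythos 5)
Your outer skeleton is exactly the one the paper uses (the Etemadi bootstrap: almost-sure convergence along a geometrically spaced subsequence $n_k=\lfloor c^k\rfloor$, then monotonicity of $S_n$ to interpolate to the full sequence, then a countable intersection over $\delta\downarrow 0$ and over $x$). But you correctly flag the crucial subsequence-extraction step as an obstacle and then leave it open with a guess that it should follow from a second-moment or concentration argument exploiting automorphism-invariance of $\owusf(G)$. That is not what makes the argument go through, and it is unclear how one would estimate $\mathrm{Var}(S_n/n)$ given how the $n$ walks are coupled.

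The paper's mechanism is a \emph{first-moment} argument, enabled by a one-sided \emph{deterministic} quantitative refinement of the Schramm bound: Lemma~\ref{lemma: occupation rate estimate with neutral green function} shows that for every rotor configuration $\rho$,
\[\frac{S_n(\rho,x)}{n}\geq \Gc(x)-C(\log n)^{-2},\]
where vertex-transitivity enters not through the measure $\owusf(G)$ but through volume growth (Trofimov's theorem) and return-probability estimates $p_t(a,x)\lesssim t^{-D/2}$, combined with the FGLP occupation-rate estimate \eqref{equation: occupation rate estimate}. Once you have this, set $\varphi(n):=\Gc(x)-C(\log n)^{-2}$: then $S_n/n-\varphi(n)\geq 0$ pointwise, and by Proposition~\ref{proposition: n-th odometer is bounded above by Green function} its mean is at most $C(\log n)^{-2}$. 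Markov's inequality now gives $\Pb[|S_n/n-\varphi(n)|\geq\epsilon]\lesssim \epsilon^{-1}(\log n)^{-2}$, which is summable along $n_k=\lfloor c^k\rfloor$ since $(\log n_k)^{-2}\asymp k^{-2}$, and Borel--Cantelli finishes the subsequence step. Without the explicit $(\log n)^{-2}$ error, the mere qualitative $\liminf$ bound from \eqref{equation: Schramm bound} plus $L^1$ convergence does not produce a summable tail, which is precisely the gap you identified; so your proposal is incomplete as it stands, and the missing ingredient is Lemma~\ref{lemma: occupation rate estimate with neutral green function} rather than a concentration estimate.
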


The proof of Theorem~\ref{theorem: Schramm bound for vertex-transitive graphs} is inspired by  Etemadi's proof of strong law of large numbers~\cite{Ete81}.
We  first  estimate the probability $q_n$
that $S_n/n$ differs from $\Gc$ by more than $\epsilon$.
We then show that  the sum $\sum q_n$ is finite when summed over any subsequence  $n_1,n_2,\ldots$ that grows exponentially, and 
 by Borel-Cantelli lemma we then conclude that $S_n/n$ converges for these subsequences.
We then upgrade this convergence to the whole sequence by using the inequality
\[  \left( \frac{n_k}{n_{k+1}}\right) \frac{S_{n_k}}{n_k}  \ \leq \ \frac{S_n}{n} \ \leq \ \left( \frac{n_{k+1}}{n_{k}}\right) \frac{S_{n_{k+1}}}{n_{k+1}},\]
which holds for any $n \in [n_k,n_{k+1}]$ ($k\geq 1$).

The crucial step here is the  estimate of  $q_n$,
which  uses an upper bound for occupation rates that hold if $\rho \overset{d}{=} \owusf(G)$ and a quantitative version of \eqref{equation: Schramm bound} that gives a lower bound for occupation rates in terms of the volume growth of $G$.
The volume growth of $G$ can in turn be estimated by using the work \cite{SC95,Tro03} that holds for all vertex-transitive graphs.


We now present another scenario for which we can give a positive answer to the question of FGLP.
\begin{theorem}\label{theorem: Schramm's bound for rotor walk stationary graphs}
Let $G$ be a connected simple graph that is locally finite and transient. 
Consider any rotor walk on $G$ with the walker initially located at a fixed vertex $a$.
Suppose that  $\owusf(G)$ is stationary with respect to the given rotor walk.
Then, for almost every $\rho$ picked from $\owusf(G)$,
\[  \lim_{n \to \infty} \frac{S_n(\rho,x)}{n}=\Gc(x) \qquad \forall \ x \in V(G).\]
\end{theorem}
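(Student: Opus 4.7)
The approach is to recognize the iterated rotor walk as a stationary dynamical system and apply Birkhoff's pointwise ergodic theorem, combined with the Schramm-type lower bound. By Theorem~\ref{theorem: odometer is bounded above by Green function}, the rotor walk started from a $\owusf(G)$-random initial configuration is a.s.\ transient, so the map $\sigma \colon \rho \mapsto \sigma(\rho)$ returning the final rotor configuration is defined $\owusf(G)$-a.s. The hypothesis that $\owusf(G)$ is stationary with respect to the rotor walk is precisely the assertion that $\sigma$ is a measure-preserving transformation of the probability space $(\Omega, \owusf(G))$, and by Theorem~\ref{theorem: two TFAE} this hypothesis upgrades to the exact identity $\Eb_\rho[u(\rho,x)] = \Gc(x) < \infty$ for every $x \in V(G)$.

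Setting $f(\rho) := u(\rho,x)$ gives a nonnegative $L^1$ function on $(\Omega, \owusf(G))$. The key bookkeeping observation is that the $k$-th walker in the FGLP procedure begins from the rotor configuration $\sigma^{k-1}(\rho)$ left by its predecessors and contributes exactly $f(\sigma^{k-1}(\rho))$ visits to $x$, so
\[
\frac{S_n(\rho,x)}{n} \;=\; \frac{1}{n}\sum_{k=0}^{n-1} f\bigl(\sigma^k(\rho)\bigr).
\]
Birkhoff's ergodic theorem then yields $S_n(\rho,x)/n \to \Eb[f \mid \mathcal{I}](\rho)$ $\owusf(G)$-a.s., where $\mathcal{I}$ is the $\sigma$-algebra of $\sigma$-invariant events.

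It remains to identify the limit as the constant $\Gc(x)$. Lemma~\ref{lemma: Schramm bound} supplies the deterministic lower bound $\liminf_n S_n(\rho,x)/n \geq \Gc(x)$ for \emph{any} initial $\rho$, forcing $\Eb[f \mid \mathcal{I}] \geq \Gc(x)$ almost surely. Since $\Eb\bigl[\Eb[f \mid \mathcal{I}]\bigr] = \Eb[f] = \Gc(x)$, the nonnegative random variable $\Eb[f \mid \mathcal{I}] - \Gc(x)$ has zero mean and hence vanishes a.s. This yields $S_n(\rho,x)/n \to \Gc(x)$ a.s.\ for each fixed $x$; intersecting countably many probability-one events across $x \in V(G)$ (which is countable since $G$ is locally finite and connected) produces the simultaneous convergence claimed in the theorem. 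I do not anticipate a serious obstacle here beyond routine measurability checks for $\sigma$ and $f$ that legitimize the invocation of Birkhoff's theorem; all the analytic content is packaged into Theorems~\ref{theorem: odometer is bounded above by Green function} and~\ref{theorem: two TFAE} together with the Schramm bound, and the argument is essentially a clean application of ergodic theory on top of them.
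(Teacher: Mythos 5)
Your proposal is correct and follows essentially the same route as the paper's proof: stationarity makes $\sigma$ measure-preserving, Birkhoff's ergodic theorem gives a.s.\ convergence of $S_n/n$ to a limit with mean $\Eb_\rho[u(\rho,x)] = \Gc(x)$ (the latter identity coming from Theorem~\ref{theorem: two TFAE}), and the deterministic lower bound of Lemma~\ref{lemma: Schramm bound} pins the limit down to the constant $\Gc(x)$ almost surely. The only cosmetic difference is that you write the Birkhoff limit explicitly as the conditional expectation given the invariant $\sigma$-algebra before integrating, whereas the paper passes directly to the equality of means; the substance is identical.
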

The proof of Theorem~\ref{theorem: Schramm's bound for rotor walk stationary graphs} uses the pointwise ergodic theorem to derive the almost sure convergence.
Note that  we can use  the pointwise ergodic theorem because the initial rotor configuration is stationary with respect to the rotor walk. 

The  question of FGLP has previously been answered positively  for all trees by Angel and Holroyd~\cite{AH11}
and for $\Zb^d$ by He~\cite{He14}.
In both cases, Theorem~\ref{theorem: Schramm bound for vertex-transitive graphs} (for $\Zb^d$) and Theorem~\ref{theorem: Schramm's bound for rotor walk stationary graphs} (for $\Tb_b$) provide  new examples of rotor configurations that answer the question of FGLP.
For any other vertex-transitive graph, Theorem~\ref{theorem: Schramm bound for vertex-transitive graphs} is the first one to provide an answer to this  question  to the best of our knowledge.

This paper is structured as follows.
In Section~\ref{section: preliminaries} we review notations  for rotor walks that will be used throughout this paper.
In Section~\ref{section: RWLMs for finite graphs} we review basic results for rotor walks on finite graphs.
In Section~\ref{section: transience of rotor walks} we prove 
Theorem~\ref{theorem: odometer is bounded above by Green function}.
In Section~\ref{section: convergence in norm of occupation rate} we prove Theorem~\ref{theorem: Schramm bound convergence in norm}.
In Section~\ref{section: rotor walk stationarity} we prove Theorem~\ref{theorem: two TFAE}.
In Section~\ref{section: rotor walk for b-ary trees} we provide some examples of graphs for which $\owusf(G)$ is stationary with respect to the rotor walk.
In Section~\ref{section: occupation rate} we prove 
Theorem~\ref{theorem: Schramm bound for vertex-transitive graphs} and Theorem~\ref{theorem: Schramm's bound for rotor walk stationary graphs}.
In Section~\ref{section: open problems} we list some open problems.

\begin{remark} Most of our results hold for the more general setting of random walks with local memory (RWLM)~\cite{CGLL18},
where the update step for the rotor at any vertex $x$  is   determined by  a Markov chain $M_x$ assigned to $x$ (instead of the given cyclic ordering).
Here $M_x$ is an ergodic Markov chain  such that its state space is the neighbors $N(x)$ of $x$ and its stationary distribution is the uniform distribution on $N(x)$.
In particular, Theorem~\ref{theorem: odometer is bounded above by Green function}, \ref{theorem: two TFAE}, \ref{theorem: Schramm bound convergence in norm}, and \ref{theorem: Schramm's bound for rotor walk stationary graphs} hold for all RWLMs. 
Note that Theorem~\ref{theorem: Schramm bound for vertex-transitive graphs} does not immediately extend to all RWLMs as 
the estimate of $q_n$ used in the proof is exclusive to rotor walks.
\end{remark}

%

\section{Preliminaries}\label{section: preliminaries}

Throughout this paper  $G:=(V(G),E(G))$ is a connected simple undirected graph that is locally finite (i.e. every vertex has finitely many edges).

The \emph{rotor walk} $(X_t)_{t \geq 0}$   on $G$ is  defined as follows.
Fix a vertex $a \in V(G)$ and a subset $Z \subseteq V(G)$.
To each vertex $x\in V(G) \setminus Z$ we assign  a  \emph{local mechanism} $\tau_x$, which is a bijection on  the neighbors $N(x)$ of $x$.
We assume  that each $\tau_x$ has one unique orbit (i.e.  $\{\tau^i(y) \mid i\geq 0 \}= N(x)$ for any neighbor $y$ of $x$). 
A \emph{rotor configuration} of $G$ is a function $\rho:V(G) \setminus Z \to V(G)$ such that $\rho(x) \in N(x)$ for all $x \in V(G) \setminus Z$.


The walker is initially located at $a$ (i.e. $X_0:=a$) and with an initial rotor configuration $\rho_0:=\rho$.
At the $t$-th step of the walk, 
the rotor of the current location of the walker
is incremented to point to the next vertex in the cyclic order 
 specified by its local mechanism, and then the walker moves to the vertex specified by this new rotor.
That is to say,
\begin{equation}\label{equation: rotor walk update}
\begin{split}
\rho_{t+1}(x)&:=\begin{cases}
 \rho_t(x) & \text{ if } x \neq X_t;\\
 \tau_{X_t}(\rho_t(X_t))  & \text{ if } x =X_t,
\end{cases}\\
X_{t+1}&:=\tau_{X_t}(\rho_t(X_t)).
\end{split}
\end{equation}
The walk is immediately terminated if the walker reaches a vertex in the \emph{sink} $Z$.
Note that it is possible for a walk to never terminate.

A rotor walk is \emph{transient} if every vertex of $G$ is visited by the walker at most finitely many times, and is \emph{recurrent} otherwise.

One aspect of the rotor walk that we will study in this paper is the final rotor configuration of a transient walk, defined as follows.
 
\begin{definition}[Final rotor configuration]\label{definition: final rotor configuration}
The \emph{final rotor configuration} $\sigma(\rho):=\sigma_{G,Z}(a,\rho)$ of a transient rotor walk is  given by 
\begin{equation*}
\sigma(\rho)(x):=\lim_{t \to \infty} \rho_t(x) \qquad \forall x \in V(G). \qedhere
\end{equation*}
\end{definition} 
Note that $\sigma(\rho)$ is well defined as the sequence 
$(\rho_t(x))_{t \geq 0}$ is eventually constant by the assumption that the walk is transient.

 
 Another aspect of the rotor walk that we will study in this paper is the odometer, defined as follows.
 
 \begin{definition}[Odometer]\label{definition: odometer}
The \emph{odometer} $u_{G,Z}(a,\rho, x)$ is the   number of visits to $x$ strictly before hitting $Z$ by the rotor walk with initial location $a$ and initial rotor configuration $\rho$, i.e.
\[u_{G,Z}(a,\rho, x):=|\{t \geq 0 \mid X_t=x \}|. \qedhere  \]
\end{definition} 
Note that the odometer for $x \in Z$ is always equal to $0$ as the odometer only counts visits strictly before hitting $Z$.


We will compare the odometer of the rotor walk to the Green function, which is the odometer for the simple random walk..

\begin{definition}[Green function]\label{definition: Green function}
The \emph{Green function}  
 $\Gc_{G,Z}(a,x)$ is the expected number of visits to $x$  strictly before hitting $Z$ by the simple random walk on $G$ that starts at $a$.
 \end{definition}

We will also study the following extended notion of odometer that we call  occupation rate.
\begin{definition}[Occupation rate]\label{definition: occupation rate}
For any $n\geq 1$, 
we define
\[ S_{G,Z,n}(a,\rho,x):= \sum_{i=0}^{n-1} {u_{G,Z}(a,\sigma^i(\rho), x)},  \]
if the rotor walks with $\rho, \sigma(\rho),\ldots, \sigma^{n-1}(\rho)$ as the initial rotor configuration are all transient,
and $S_{G,Z,n}(a,\rho,x):=\infty$ otherwise.
That is, $S_{G,Z,n}(a,\rho,x)$ is the  total  number of visits to $x$ of $n$ rotor walks performed without resetting the rotors in between walks.
%
The $n$-th \emph{occupation rate} of the rotor walk  is  $\frac{S_{G,Z,n}(a,\rho,x)}{n}$.
\end{definition}


We will omit  the underlying graph $G$, the initial location $a$, the initial rotor configuration $\rho$, or the sink $Z$  from the notations when they are evident from the context.
In particular, we will always omit the initial location $a$ from the notation.
 
\section{Rotor walks on finite graphs}\label{section: RWLMs for finite graphs}
In this section we review several results  for rotor walks on finite graphs, and we refer to \cite{HLM08} for a more detailed discussion on this topic.

 Here $G$ is a finite simple connected graph;
 the initial location of the walker is a fixed vertex $a$;
 and the sink $Z$ is a nonempty subset of $V(G)$.
 Note that the corresponding rotor walk always terminates in finite time, as the walker will eventually reach a vertex in $Z$.

%


 


The  initial rotor configuration for the rotor walk  is picked from oriented  spanning forests, defined as follows.

\begin{definition}[Oriented spanning forest]
A \emph{$Z$-oriented spanning forest} of $G$ is an oriented subgraph $F$ of $G$ such that
\begin{enumerate}
\item Every vertex in $Z$ has outdegree $0$ in $F$;
\item Every vertex in $G \setminus Z$ has outdegree 1 in $F$; and
\item $F$ contains no directed cycles. \qedhere
\end{enumerate}
%
%
\end{definition}

Note that  each $Z$-oriented spanning forest $F$ corresponds to a rotor configuration $\rho:=\rho_F$,
where for every $x \in V(G) \setminus Z$, the state $\rho(x)$ is the out-neighbor of $x$ in $F$. 
 Throughout this paper, we will treat $\rho$ both as  a rotor configuration and as an oriented subgraph of $G$ interchangeably.
 
 We denote by $\SF(G,Z)$ the set of $Z$-oriented spanning forests of $G$.
 
 \begin{definition}[Oriented uniform spanning forest]\label{definition: usf}
 The \emph{$Z$-oriented uniform spanning forest}, denoted by $\ousf(G,Z)$, is the uniform probability distribution on $Z$-oriented spanning forests of $G$.
 \end{definition}
 

The next proposition shows that  $\ousf(G,Z)$ is in a certain sense a stationary distribution of the rotor walk. 
Recall the definition of the final rotor configuration $\sigma(\rho)$ (Definition~\ref{definition: final rotor configuration}).

\begin{proposition}[{\cite[Lemma~3.11]{HLM08}}]\label{proposition: stationarity of wsf for finite graphs}
Let $G$ be a finite simple connected graph.
Consider any rotor walk on $G$ with initial location $a$ and with nonempty sink $Z$.
If  the initial rotor configuration $\rho$ is sampled from $\ousf(G,Z)$,
then the final rotor configuration $\sigma(\rho)$ also follows the law of  $\ousf(G,Z)$. \qed
\end{proposition}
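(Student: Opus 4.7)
The plan is to establish that the deterministic map $\sigma : \SF(G,Z) \to \SF(G,Z)$ induced by the rotor walk starting at $a$ is a bijection on the finite set $\SF(G,Z)$. Since $\ousf(G,Z)$ is by definition the uniform measure on $\SF(G,Z)$, any bijection of this set automatically preserves the measure, and the proposition follows at once.

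First I would verify that $\sigma$ is well-defined as a map on $\SF(G,Z)$. Finite-time termination of the rotor walk on a finite graph with nonempty sink is standard, so $\sigma(\rho)$ exists. The outdegree conditions for membership in $\SF(G,Z)$ are immediate from \eqref{equation: rotor walk update}, leaving acyclicity as the only substantive point. I would prove this by induction on $t$, maintaining the invariant that every directed cycle of $\rho_t$ passes through the walker's current position $X_t$: any cycle newly created by the rotor update at $X_t$ must use the new edge from $X_t$ to $X_{t+1}$ and therefore passes through $X_{t+1}$, while any old cycle through $X_t$ used the pre-update rotor at $X_t$ and is broken by the update. At termination the walker lies in $Z$, which has no outgoing rotor, so $\sigma(\rho)$ is acyclic.

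The main obstacle is proving that $\sigma$ is a bijection, for which injectivity suffices since $\SF(G,Z)$ is finite. My approach is to construct an explicit inverse by time-reversing the rotor walk. Given $\sigma(\rho) \in \SF(G,Z)$, the terminal vertex $z \in Z$ of the forward walk is recoverable as the unique sink reached by following rotors in $\sigma(\rho)$ starting from $a$. One then defines a \emph{reverse rotor walk}, governed by the inverse local mechanisms $\tau_x^{-1}$, that starts at $z$ with configuration $\sigma(\rho)$ and halts upon reaching $a$. Each reverse step is engineered to undo exactly one forward step: it moves the walker from $X_{t+1}$ to a uniquely determined neighbor $X_t$ satisfying $\rho_{t+1}(X_t) = X_{t+1}$, and then replaces this rotor value by $\tau_{X_t}^{-1}(X_{t+1}) = \rho_t(X_t)$.

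The delicate technical point, which I expect to be the real obstacle, is to verify that the reverse step is unambiguous, i.e., that the vertex $X_t$ is uniquely determined by the current state $(X_{t+1}, \rho_{t+1})$ even when several neighbors of $X_{t+1}$ have rotors pointing to $X_{t+1}$. This reduces to analyzing how the cycle-structure invariant from Step~1 singles out the most recently updated rotor among those incident to $X_{t+1}$, and exploits the acyclicity of $\sigma(\rho)$ to propagate this unambiguity backward through the trajectory. Once this is in place, the reverse walk deterministically reconstructs $\rho$ from $\sigma(\rho)$, establishing the bijectivity of $\sigma$ and completing the proof.
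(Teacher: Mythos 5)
The paper does not actually prove this proposition; it cites it from \cite[Lemma~3.11]{HLM08}, so there is no in-paper argument to compare against. Your overall strategy---show that $\sigma$ is a bijection of the finite set $\SF(G,Z)$, from which preservation of the uniform measure $\ousf(G,Z)$ is automatic---is the standard route, and your well-definedness argument is sound: the acyclicity of $\sigma(\rho)$ via the invariant that every directed cycle of $\rho_t$ passes through $X_t$ is correct.

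The genuine gap is precisely the step you flag as the real obstacle: unambiguity of the reverse step. You defer it to an unspecified ``analysis of the cycle-structure invariant,'' but that analysis \emph{is} the substance of the injectivity proof, and it is not routine. Concretely, to single out $X_t$ among the possibly several neighbors $y$ with $\rho_{t+1}(y)=X_{t+1}$, one needs: (a) $\rho_{t+1}$ has at most one directed cycle, and if one exists it must traverse the new edge $X_t \to X_{t+1}$ (any cycle avoiding this edge would already be a cycle of $\rho_t$ not passing through $X_t$, contradicting your invariant), so in the cyclic case $X_t$ is the cycle-predecessor of $X_{t+1}$; (b) in the acyclic case, the rotor path started from $a$ in $\rho_{t+1}$ is simple, reaches $X_t$, and steps from $X_t$ directly to $X_{t+1}$, so $X_t$ is the unique path-predecessor of $X_{t+1}$. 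Establishing (b) requires an additional induction---that the rotor path from $a$ in $\rho_t$ always passes through $X_t$---which you do not state. Finally, one needs a stopping rule for the reverse walk (halt when the walker is at $a$ with an acyclic configuration) together with a proof that it does not fire prematurely, which amounts to showing that for $0 < t$ with $X_t = a$ the configuration $\rho_t$ necessarily contains a cycle, again via the rotor-path-from-$a$ lemma. None of this is carried out in the proposal, so as written it establishes that $\sigma$ maps $\SF(G,Z)$ into itself but does not establish that $\sigma$ is injective.
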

 
%
%
%
%
%
 

The next proposition shows that the expected number of visits by the rotor walk and the simple random walk are equal if
the initial rotor configuration is sampled from $\ousf(G,Z)$.
Recall the definition of the odometer $u$ (Definition~\ref{definition: odometer}) and the Green function $\Gc$ (Definition~\ref{definition: Green function}).

 \begin{proposition}\label{proposition: Green function equality  for finite graphs}
Let $G$ be a finite simple connected graph.
Consider any rotor walk on $G$ with  initial location $a$ and  with nonempty sink $Z$.
Then,  for all $x \in V(G)$,
\begin{equation*}
 \Eb_{\rho}[u(\rho,x)] =\Gc(x), 
\end{equation*}
where $\rho$ is sampled from $\ousf(G,Z)$.
\end{proposition}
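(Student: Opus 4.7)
The plan is to reduce the identity to the Dirichlet characterization of the Green function: $\Gc$ is the unique function vanishing on $Z$ and satisfying
\[
\Gc(x) = \mathbbm{1}[x=a] + \sum_{y \sim x,\, y \notin Z} \frac{\Gc(y)}{d(y)} \qquad \text{for all } x \in V(G) \setminus Z,
\]
where $d(y)$ denotes the degree of $y$. Uniqueness holds because $G$ is finite and connected with $Z \neq \emptyset$. It therefore suffices to establish the same recursion for $f(x) := \Eb_\rho[u(\rho,x)]$.

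For each $y \in V(G) \setminus Z$ and $z \in N(y)$, I introduce $N_{y,z}(\rho)$, the number of times the rotor walk crosses the edge $(y,z)$ from $y$ to $z$. Conservation of flow at a non-sink vertex $x$ (arrivals equal departures, plus one extra departure when $x=a$, since the walk terminates in $Z$) gives $u(\rho,x) = \mathbbm{1}[x=a] + \sum_{y \sim x,\, y \notin Z} N_{y,x}(\rho)$. The heart of the matter---and the step I expect to be the main obstacle---is the edge-equidistribution identity
\[
\Eb_\rho[N_{y,z}(\rho)] = \frac{\Eb_\rho[u(\rho,y)]}{d(y)} \qquad \text{for every } y \in V(G) \setminus Z \text{ and } z \in N(y).
\]
Once this identity is in hand, substituting into the flow relation yields exactly the Green-function recursion for $f$, and uniqueness of the Dirichlet solution closes the argument.

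To prove the edge-equidistribution identity, I would exploit the stationarity statement in Proposition~\ref{proposition: stationarity of wsf for finite graphs}. Fix $y \in V(G) \setminus Z$ and an arbitrary function $g_y \colon N(y) \to \Rb$. Each visit to $y$ advances the rotor by one step of $\tau_y$, so $\sigma(\rho)(y) = \tau_y^{u(\rho,y)}(\rho(y))$; telescoping $g_y$ over the successive rotor values gives
\[
g_y(\sigma(\rho)(y)) - g_y(\rho(y)) = \sum_{z \in N(y)} \bigl[g_y(z) - g_y(\tau_y^{-1}(z))\bigr] \, N_{y,z}(\rho).
\]
By Proposition~\ref{proposition: stationarity of wsf for finite graphs}, $\rho$ and $\sigma(\rho)$ have the same law, so the left-hand side has mean zero. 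Hence $\sum_{z \in N(y)} h_y(z) \, \Eb_\rho[N_{y,z}(\rho)] = 0$ whenever $h_y = g_y - g_y \circ \tau_y^{-1}$. Since $\tau_y$ acts on $N(y)$ as a single cycle, the coboundary operator $g_y \mapsto g_y - g_y \circ \tau_y^{-1}$ surjects onto the zero-sum functions on $N(y)$; this forces $\Eb_\rho[N_{y,z}(\rho)]$ to be independent of $z$, and since these values sum to $\Eb_\rho[u(\rho,y)]$, each equals $\Eb_\rho[u(\rho,y)]/d(y)$, completing the reduction.
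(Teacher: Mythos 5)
Your argument is correct, and it reaches the conclusion by a genuinely different route than the paper. The paper first proves a deterministic statement (Lemma~\ref{lemma: odometer is harmonic for finite graphs}): for every fixed $\rho$, the Ces\`aro averages $S_n(\rho,x)/n$ converge to $\Gc(x)$. That proof exploits the fact that the orbit $(\sigma^i(\rho))_{i\ge 0}$ is eventually periodic, so that over one period $m$ the rotor at each $y$ turns through a whole number of full cycles, giving the \emph{exact} deterministic equidistribution $S_m(\rho,y,x)=S_m(\rho,y)/\deg(y)$; the function $S_m(\rho,\cdot)/(m\deg(\cdot))$ then solves the Dirichlet problem characterizing $\Gc/\deg$. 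Proposition~\ref{proposition: Green function equality  for finite graphs} follows by combining this with the stationarity of $\ousf(G,Z)$ (Proposition~\ref{proposition: stationarity of wsf for finite graphs}), since $\Eb_\rho[u(\rho,x)]=\Eb_\rho[S_n(\rho,x)/n]\to\Gc(x)$. You instead work directly in expectation: flow conservation gives $u(\rho,x)=\mathbbm{1}[x=a]+\sum_{y\sim x,\,y\notin Z}N_{y,x}(\rho)$, and the key edge-equidistribution identity $\Eb_\rho[N_{y,z}]=\Eb_\rho[u(\rho,y)]/d(y)$ is derived via a telescoping/coboundary argument: $g_y(\sigma(\rho)(y))-g_y(\rho(y))=\sum_{z}[g_y(z)-g_y(\tau_y^{-1}(z))]N_{y,z}(\rho)$ has mean zero by stationarity, and since $\tau_y$ is a single cycle the coboundary operator surjects onto zero-sum functions, forcing $\Eb_\rho[N_{y,\cdot}]$ to be constant. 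Then $\Eb_\rho[u(\rho,\cdot)]$ satisfies the Dirichlet recursion for $\Gc$ and you finish by uniqueness. Both proofs share the same skeleton (flow conservation, equidistribution of outgoing edges, Dirichlet uniqueness) and both ultimately rest on Proposition~\ref{proposition: stationarity of wsf for finite graphs}; they differ in \emph{where} the equidistribution comes from --- periodicity of the rotor orbit (paper, giving the deterministic Ces\`aro statement) versus stationarity in expectation plus the cyclic structure of $\tau_y$ (you). The paper's route buys Lemma~\ref{lemma: odometer is harmonic for finite graphs}, which is reused later (e.g.\ in the proof of Lemma~\ref{lemma: Schramm bound}); your route is more economical for this proposition and isolates the clean identity $\Eb_\rho[N_{y,z}]=\Eb_\rho[u(\rho,y)]/d(y)$, which is of some independent interest.
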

Note that  links between the Green function and the dynamics of the process  have appeared regularly in the study of self-organized criticality; see \cite{Dhar90, HLM08, HP10, CL18} for  non-exhaustive examples.

We now build toward the proof of Proposition~\ref{proposition: Green function equality  for finite graphs}.

\begin{lemma}\label{lemma: odometer is harmonic for finite graphs}
Let $G$ be a finite simple connected graph.
Consider any rotor walk on $G$ with initial location $a$ and with nonempty sink $Z$.
Then, for any rotor configuration $\rho$ and any $x \in V(G)$,
\begin{equation*}
  \lim_{n \to \infty}\frac{S_n(x)}{n} =\Gc(x).
  \end{equation*}
\end{lemma}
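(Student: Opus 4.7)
The plan is to show that $S_n(x)/n$ approximately satisfies the defining Poisson equation for the Green function $\Gc$, with error vanishing as $n\to\infty$, and then to invoke uniqueness of solutions. First, since $G$ is finite and $Z$ is nonempty, a standard argument shows that every rotor walk on $G$ with sink $Z$ terminates within a constant number of steps $M=M(G,Z)$, independently of the initial rotor configuration. Hence $S_n(x)\le nM$, so the sequence $\{S_n/n\}$ is uniformly bounded and lies in a compact subset of $\Rb^{V(G)}$, which already guarantees convergent subsequences.

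Next, I would derive the approximate harmonicity equation. The key observation is that rotors are \emph{not reset} between the $n$ walks: the rotor at each vertex $y$ undergoes one single block of $S_n(y)$ consecutive increments, starting from $\rho(y)$. Because the local mechanism $\tau_y$ is a cyclic permutation of $N(y)$ with a single orbit, equidistribution then gives that the number of departures from $y$ to a specified neighbor $x$ across all $n$ walks equals $S_n(y)/\deg(y)$ up to an error of at most $1$. This is the crucial point: a naive per-walk bound would accumulate an error of order $n$ and destroy the argument. Combining this with the visit/arrival balance (each non-sink $x\neq a$ satisfies visits $=$ arrivals at $x$, while $x=a$ picks up one extra initial visit per walk, and no walker ever departs from $Z$), one obtains, for every $x\in V(G)\setminus Z$,
\[
 \frac{S_n(x)}{n}-\sum_{y\sim x}\frac{1}{\deg(y)}\cdot\frac{S_n(y)}{n}=\mathbbm{1}_{x=a}+O(1/n),
\]
with the convention $S_n(y):=0$ for $y\in Z$ and the implicit constant depending only on $G$.

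Finally, by compactness any limit point $g\in\Rb^{V(G)}$ of $\{S_n/n\}$ satisfies $g|_Z=0$ and the Poisson equation $g(x)-\sum_{y\sim x}g(y)/\deg(y)=\mathbbm{1}_{x=a}$ for $x\in V(G)\setminus Z$. Since $G$ is finite and connected and $Z$ is nonempty, the simple random walk on $G$ is absorbed at $Z$ in finite time, so this equation has a unique solution, namely the Green function $\Gc$. Hence the full sequence $S_n/n$ converges pointwise to $\Gc$. The main obstacle I would highlight is precisely the equidistribution step: securing an $O(1)$ error rather than an $O(n)$ error requires exploiting that the rotor at each $y$ carries over between consecutive walks, so that all $S_n(y)$ increments concatenate into one contiguous orbit of $\tau_y$ on $N(y)$ along which the departure counts deviate from the uniform count by at most $1$.
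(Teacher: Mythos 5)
Your proof is correct, but it takes a genuinely different route from the paper. The paper exploits the finiteness of $G$ to observe that the sequence of rotor configurations $(\sigma^i(\rho))_{i\ge 0}$ is eventually periodic with some period $m$, reduces without loss of generality to the periodic case, and then identifies the limit exactly as $S_m/m$. The crucial simplification is that $\sigma^m(\rho)=\rho$ forces $S_m(\rho,y)$ to be an exact multiple of $\deg(y)$, so the equidistribution identity $S_m(\rho,y,x)=S_m(\rho,y)/\deg(y)$ holds with no error term at all; the Poisson equation is then exact for $S_m/m$, and uniqueness of the Dirichlet problem finishes. You instead avoid invoking periodicity and work directly with the full sequence $S_n/n$: you use the uniform bound $u\le M(G,Z)$ to get compactness, establish the Poisson equation only up to an $O(1/n)$ error (your observation that the non-resetting of rotors makes the $S_n(y)$ increments at $y$ one contiguous block of the cyclic orbit is exactly the right point, and it is what keeps the error $O(1)$ rather than $O(n)$), pass to subsequential limits, and then invoke uniqueness to conclude every limit point equals $\Gc$. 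Both arguments hinge on the same two facts --- cyclic equidistribution of departures and uniqueness for the Dirichlet problem --- but yours is softer and arguably more robust, since it would survive in settings where exact periodicity is unavailable, at the modest cost of carrying an explicit error term and a compactness step that the paper's periodicity trick renders unnecessary.
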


We will use the following notation in the proof of Lemma~\ref{lemma: odometer is harmonic for finite graphs}.
For any function $f: V(G)\to \Rb$, the 
  \emph{discrete Laplacian} of $f$ is  the function 
\[ \Delta f(x) :=\frac{1}{\deg(x)} \sum_{y \sim x} f(y)-f(x) \qquad \forall \ x \in V(G).  \]
Here $y\sim x$ means that $y$ is a neighbor of $x$ in $G$.
For any $x \in V(G)$ and any $y \sim x$,
we denote by $u(\rho,y,x):=u_{G,Z}(\rho,y,x)$ the  total number of utilization of the edge $(y,x)$ by the rotor walk, i.e.,
\[ u(\rho,y,x):=|\{t \geq 0 \mid   X_t=y \text{ and } X_{t+1}=x \}|.  \] 
For  $n\geq 1$,
we denote by $S_n(\rho,y,x):=S_{G,Z,n}(\rho,y,x)$ the  total number of utilization of the edge $(y,x)$ by $n$ rotor walks performed sequentially, i.e.,
\[S_n(\rho,y,x):=\sum_{i=0}^{n-1}   u(\sigma^i(\rho),y,x). \]

\begin{proof}[Proof of Lemma~\ref{lemma: odometer is harmonic for finite graphs}]
Since $G$ is a finite graph,
the sequence  $(\sigma^i(\rho))_{i\geq 0}$ is eventually periodic, i.e., there exist integers $k$ and $m$ such that 
$\sigma^k(\rho)=\sigma^{k+m}(\rho)$.
We can without loss of generality assume that this sequence is periodic (by replacing $\rho$ with $\sigma^k(\rho)$ if necessary).
This implies that the sequence 
$(u(\sigma^n(\rho),x))_{n \geq 0}$ is also periodic, which in turn implies that
\begin{align}\label{equation: occupation rate limit}
 \lim_{n \to \infty} \frac{S_{n}(\rho,x)}{n}=&\lim_{n \to \infty} \frac{1}{n} \sum_{i=0}^{n-1}{u(\sigma^{i}(\rho),x)}
 =\frac{S_m(\rho,x)}{m}. 
\end{align}


Let $F: V(G) \to \Rb$ be the function given by $F(x):=\frac{S_m(\rho,x)}{m \deg(x)}$.
It suffices to show that $F$
 satisfies the following identities:
%
%
\begin{equation}\label{equation: odometer is harmonic}
\begin{split}
 \Delta F(x)=&  \,  -\mathbbm{1}\{a=x\} / \deg(x) \\
  F(x)=& \, 0  
 \end{split}
 \qquad 
 \begin{split}
   &\text{for } x \notin Z; \qquad \text{and}\\
   &\text{for } x \in Z.
 \end{split}
\end{equation}
Indeed, this is because the function $\Gc(x)$ also satisfies the same identities (see \cite[Proposition~2.1]{LP16} for a proof).
By the uniqueness principle for the Dirichlet problem on finite graphs,
we then conclude that 
 $F(x)= \frac{\Gc(x)}{\deg(x)}$, which together with \eqref{equation: occupation rate limit} implies the lemma.

The identity that $F(x)=0$ for $x\in Z$ is a consequence of   the odometer counting only visits strictly before hitting $Z$.
We now prove the identity $ \Delta F(x)=    -\mathbbm{1}\{a=x\} / \deg(x)$ for $x \notin Z$. 
Note that the total number of visits  to any vertex $x \notin Z$ of the rotor walk is equal to the  total number of utilization  of its  incoming edges if $x$ is not equal to $a$, and is equal to the same number but with one extra visit if $x=a$ (because of the visit to $a$ at the $0$-th step).
This implies that, for any  $x\notin Z$, 
\begin{equation}\label{equation: odometer 1}
S_m(\rho,x)=m \mathbbm{1}\{a=x   \} +  \sum_{y \sim x} S_m(\rho,y,x).
\end{equation}

Now note that
we have the final rotor configuration $\sigma^m(\rho)$ after performing $m$ rotor walks is equal to the initial rotor configuration $\rho$.
Since the local mechanism at $y$ is a periodic function with period $\deg(y)$, it then follows that $S_m(\rho,y,x)= S_m(\rho,y)/\deg(y)$.
Plugging this into \eqref{equation: odometer 1} and dividing both sides by $m\deg(x)$, we then get
\begin{equation*}
\frac{S_m(\rho,x)}{m\deg(x)}= \frac{\mathbbm{1}\{a=x   \}}{\deg(x)} +  \frac{1}{\deg(x)}\sum_{y\sim x} \frac{S_m(\rho,y)}{m\deg(y)}.
\end{equation*}
Note that this equation is equivalent to $ \Delta F(x)=    -\mathbbm{1}\{a=x\} / \deg(x)$.
This completes the proof.
%
\end{proof}

We now present the proof of Proposition~\ref{proposition: Green function equality  for finite graphs}.

\begin{proof}[Proof of Proposition~\ref{proposition: Green function equality  for finite graphs}]
We have for any $n \geq 1$ that
\begin{align*}
\Eb_{\rho}\left[\frac{S_n(\rho,x)}{n} \right]=&\frac{1}{n} \sum_{i=0}^{n-1}
\Eb_{\rho}\left[ {u(\sigma^i(\rho),x)}\right]=\frac{1}{n} \sum_{i=0}^{n-1}
\Eb_{\rho}\left[ {u(\rho,x)}\right] \\
=&\Eb_\rho[u(\rho,x)],
\end{align*}
where the second equality is due to Proposition~\ref{proposition: stationarity of wsf for finite graphs}.
It then follows that
\[ \Eb_\rho[u(\rho,x)]=\lim_{n \to \infty} \frac{S_n(\rho,x)}{n}=\Gc(x),  \]
where the second equality is due to Lemma~\ref{lemma: odometer is harmonic for finite graphs}.
This proves the proposition.
\end{proof}

 \section{Wired spanning forest and rotor walks}
 \label{section: transience of rotor walks}
In this section we begin our investigation of  rotor walks
whose initial rotor configuration is sampled from the oriented wired uniform spanning forest,
and in the process we prove Theorem~\ref{theorem: odometer is bounded above by Green function}. 

For the rest of this paper,  $G$ is a simple connected graph that is locally finite and transient, 
the initial location of the walker is a fixed vertex $a$,
and 
 the sink $Z$ for the rotor walk is empty (i.e. the walk is never terminated), unless stated otherwise.
The initial rotor configuration is picked  from oriented spanning forests of $G$, defined as follows.

\begin{definition}[Oriented spanning forests]\label{definition: oriented spanning forest}
An \emph{oriented spanning forest} of $G$ is an oriented subgraph $F$ of $G$ such that
\begin{itemize}
\item Every vertex of $G$ has outdegree exactly 1 in $F$; and
\item There are no directed cycles in $F$. \qedhere
\end{itemize}
\end{definition} 
 We denote by $\SF(G)$ the set of oriented spanning forests of $G$.
 

An \emph{exhaustion} of $G$ is a finite sequence $(W_r)_{r\geq 0}$ of increasing finite connected subsets of $V(G)$ such that
$\bigcup_{r \geq 0} W_r=V(G)$.
Let $G_r$ be the induced subgraph of $W_r$, and let   $Z_r$ be the set
\[Z_r:=\{x \in W_r \mid  d_{G}(x, G\setminus W_r)=1\}.   \]
That is, $Z_r$ is the set of vertices in $W_r$ that are adjacent to a vertex not in $W_r$.
We denote by $\mu_r$ the probability measure $\ousf(G_r,Z_r)$ (see Definition~\ref{definition: usf}) on the oriented spanning trees of $G_r$.

\begin{definition}[Oriented wired uniform spanning forest]
\label{definition: wusf}
The \emph{wired uniform spanning forest oriented toward infinity} $\owusf(G):=\owusf$ is the probability distribution on oriented subgraphs of $G$ such that, for any finite subset $B$ of directed edges of $G$, 
\begin{equation}\label{equation: limit definition wusf} 
\owusf[B \subseteq F]=\lim_{r \to \infty} {\mu}_r[B \subseteq {F_r}],
\end{equation}
where $F$ is an oriented subgraph of $G$ sampled from $\owusf(G)$, and ${F_r}$ is an $Z_r$-oriented subgraph of $G_r$ sampled from ${\mu}_r$.
\end{definition}

The limit in \eqref{equation: limit definition wusf} exists and does not depend on the choice of the exhaustions (see \cite[Theorem~5.1]{BLPS01} or  \cite[Proposition~10.1]{LP16} for a proof).
Note that the assumption that $G$ is transient is crucial here,
as  $\lim_{r \to \infty} {\mu}_r[B \subseteq {F_r}]$ can depend on the choice of exhaustions if the underlying graph is recurrent (Importantly, the choice of exhaustions influences the orientation of $F$, but not the underlying graph of $F$!).

Throughout this paper we will fix our choice of $W_r$  by taking 
$W_r$ to be the ball $B_r$ of radius $r$ centered at $a$ (i.e., the set of vertices whose graph distance from $a$ is at most $r$).
Note that
$Z_r$ is then equal to the boundary $\partial B_r$ of the ball $B_r$  (i.e., the set of vertices whose graph distance from $a$ is equal to $r$).

We remark that $\owusf(G)$ can also be constructed by using Wilson's method oriented toward infinity. Importantly, 
we do not remove the orientation of  the edges in the construction.
 We refer to \cite{BLPS01,LP16} for a more detailed discussion on the wired uniform spanning forest.

Note that 
 every vertex of $G$ has outdegree $1$ in the oriented subgraph $F$ sampled from $\owusf(G)$.  
In particular, $F$ corresponds to the rotor configuration $\rho:=\rho_F$ where for every $x \in V(G)$ the state $\rho(x)$ is the out-neighbor of $x$ in $F$.
As has been mentioned in the beginning of the section,
our initial rotor configuration will always be sampled from $\owusf(G)$, unless stated otherwise.


We now restate Theorem~\ref{theorem: odometer is bounded above by Green function} for the convenience of the reader.
Recall the definition of the odometer $u$ (Definition~\ref{definition: odometer}) and the Green function $\Gc$ (Definition~\ref{definition: Green function}).
Note that $\Gc(x)$ is always finite  since $G$ is a transient graph.

\begin{reptheorem}{theorem: odometer is bounded above by Green function}
Let $G$ be a simple connected graph that is locally finite and transient.
Consider any rotor walk on $G$ with initial location $a$ and with empty sink.
Then, for any $x \in V(G)$,
\begin{equation*}
\Eb_{\rho}[u(\rho,x)]\leq \Gc(x), 
\end{equation*}
where $\rho$ is sampled from  $\owusf(G)$.
\end{reptheorem}

The following result is a direct corollary of Theorem~\ref{theorem: odometer is bounded above by Green function}.
 
\begin{corollary}\label{corollary: RWLM is transient}
Let $G$ be a simple connected graph that is locally finite and transient.
Consider any rotor walk on $G$ with initial location $a$ and with empty sink.
Then, for almost every initial rotor configuration sampled from $\owusf(G)$,
the corresponding rotor walk is transient. \qed
\end{corollary}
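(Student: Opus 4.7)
The plan is to deduce the corollary directly from Theorem~\ref{theorem: odometer is bounded above by Green function} together with a routine union-bound argument, with no further probabilistic work required. First I would invoke the theorem to obtain, for every fixed vertex $x \in V(G)$,
\[
\Eb_\rho[u(\rho,x)] \leq \Gc(x) < \infty,
\]
where the finiteness on the right uses that $G$ is transient. Since $u(\rho,x)$ is a nonnegative integer-valued random variable with finite expectation, standard measure theory gives $u(\rho,x) < \infty$ for $\owusf(G)$-almost every $\rho$. Call this almost sure event $A_x$.

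Next I would use the fact that $V(G)$ is countable, which follows from $G$ being connected and locally finite (iteratively, each ball $B_r$ is finite). Hence the intersection $A := \bigcap_{x \in V(G)} A_x$ is still a countable intersection of full-measure events, so $\owusf(G)(A) = 1$. On the event $A$, we have $u(\rho,x) < \infty$ for every $x \in V(G)$ simultaneously, which is precisely the definition of the rotor walk being transient (every vertex is visited only finitely many times).

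There is no real obstacle here beyond ensuring that the countability step is justified; the work has all been done in establishing the pointwise expectation bound in Theorem~\ref{theorem: odometer is bounded above by Green function}, which itself required passing from finite graphs to the infinite-volume limit. Once that bound is in hand, transience is an immediate Markov-inequality-plus-countable-union consequence, so the corollary can be stated with a one-line proof referencing the theorem and the finiteness of $\Gc(x)$.
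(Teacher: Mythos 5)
Your proof is correct and is exactly the argument the paper intends (the paper marks the corollary with \qed as an immediate consequence of Theorem~\ref{theorem: odometer is bounded above by Green function}). The chain — finite expectation $\Rightarrow$ a.s.\ finiteness of $u(\rho,x)$ for each fixed $x$, then countability of $V(G)$ from connectedness and local finiteness to intersect the full-measure events — is precisely the standard route, and you correctly flag the only point that needs checking.
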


We now present the proof of Theorem~\ref{theorem: odometer is bounded above by Green function}.

\begin{proof}[Proof of Theorem~\ref{theorem: odometer is bounded above by Green function}]

Let $r$ be any positive integer.
Note that the rotor walk  terminated upon hitting  $Z_r=\partial B_r$ is a process that depends only on the rotor of vertices  in 
$W_r=B_r$.
In particular, the  number  of visits to $x$  by this rotor walk is a function of $\rho$ that depends only on finitely many edges.
By  \eqref{equation: limit definition wusf}, we then have
\begin{align}\label{equation: rotor walk is transient 1}
\Eb_\rho[u_{G,Z_r}(\rho,x)]= \lim_{R \to \infty} \Eb_{\rho_R}[u_{G_R,Z_r}(\rho_R,x)],
\end{align}
where $\rho_R$ is a rotor configuration of $G_R$ sampled from 
$\ousf(G_R,Z_R)$.

Now note that the number of visits to any vertex will only increase if the sink of the rotor walk  is moved further away from the initial location of the walker.
Hence, for any $R\geq r$, we have
\begin{align}\label{equation: rotor walk is transient 2}
\begin{split}
 \Eb_{\rho_R}[u_{G_R,Z_r}(\rho,x)] \leq 
  \Eb_{\rho_R}[u_{G_R,Z_R}(\rho,x)]
  =& \Gc_{G_R,Z_R}(x), \end{split}
\end{align}
where the equality is due to the stationarity of $\ousf(G_R,Z_R)$ for rotor walks on finite graphs~(Proposition~\ref{proposition: Green function equality  for finite graphs}).
Combining  \eqref{equation: rotor walk is transient 1} and \eqref{equation: rotor walk is transient 2} and then taking the limit as $R\to \infty$, we then have
\begin{align*}
\Eb_\rho[u_{G, Z_r}(\rho,x)] \leq  \lim_{R \to \infty} \Gc_{G_R,Z_R}(x)=  \Gc_{G,\varnothing}(x).
\end{align*}

Now note that $u_{G, Z_r}$ increases to $u_{G,\varnothing}$
as $r \to \infty$ (because the total number of visits can only increase if the sink is further away).
By the monotone convergence theorem, we then conclude that:
\begin{align*}
\Eb_{\rho}[u_{G,\varnothing}(\rho,x)]= &  \lim_{r \to \infty} \Eb_\rho [u_{G, Z_r}(\rho,x)]
\leq  \Gc_{G,\varnothing}(x),
\end{align*}
as desired.
\end{proof}

Using a similar method in proving  Theorem~\ref{theorem: odometer is bounded above by Green function},
one can prove the following stronger result.
Recall the definition of occupation rate $S_n/n$ from Definition~\ref{definition: occupation rate}.

\begin{proposition}\label{proposition: n-th odometer is bounded above by Green function}
Let $G$ be a simple connected graph that is locally finite and transient.
Consider $n$ rotor walks on $G$ performed sequentially with initial location $a$ and with empty sink.
Then, for any  $x \in V(G)$, 
\[ \Eb_{\rho}[S_n(\rho,x)]\leq n \Gc(x), \]
where $\rho$ is sampled from  $\owusf(G)$. \qed
\end{proposition}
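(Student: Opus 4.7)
The plan is to parallel the proof of Theorem~\ref{theorem: odometer is bounded above by Green function} with $S_n$ replacing $u$. First I would lift Proposition~\ref{proposition: Green function equality  for finite graphs} to the $n$-walk setting on the finite graph $G_R$ with sink $Z_R$. Since by Proposition~\ref{proposition: stationarity of wsf for finite graphs} $\sigma^i(\rho_R)\sim\ousf(G_R,Z_R)$ for every $i\geq 0$ when $\rho_R\sim\ousf(G_R,Z_R)$, applying Proposition~\ref{proposition: Green function equality  for finite graphs} termwise gives the finite-graph equality
\[ \Eb_{\rho_R}\bigl[S_{G_R,Z_R,n}(\rho_R,x)\bigr]=\sum_{i=0}^{n-1}\Eb_{\rho_R}\bigl[u(\sigma^i(\rho_R),x)\bigr]=n\,\Gc_{G_R,Z_R}(x). \]

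The main technical obstacle is the sink-monotonicity
\[ S_{G_R,Z_r,n}(\rho,x)\leq S_{G_R,Z_R,n}(\rho,x)\qquad\forall\,r\leq R,\ \rho,\ x\in V(G_R), \]
which in the $n=1$ case used by the proof of Theorem~\ref{theorem: odometer is bounded above by Green function} is immediate, but for general $n$ is not, because the intermediate configurations $\sigma_{Z_r}^i(\rho)$ and $\sigma_{Z_R}^i(\rho)$ differ. I would prove it by invoking the abelian property of rotor walks: the $n$-chip process on $G_R$ with sink $Z_R$ admits a schedule in which each chip is first paused the moment it reaches $Z_r$, and only afterwards are the paused chips released to continue on toward $Z_R$. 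By the abelian property the firings (and hence the visit counts) produced by this schedule agree with those of the standard sink-$Z_R$ schedule, while the pausing phase is precisely the $n$-chip process with sink $Z_r$ and contributes $S_{G_R,Z_r,n}(\rho,x)$ visits at $x$; the release phase contributes a nonnegative number, which gives the inequality.

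With the monotonicity in hand, the infinite-volume passage copies the proof of Theorem~\ref{theorem: odometer is bounded above by Green function}. Since $S_{G,Z_r,n}(\rho,x)$ depends only on the finitely many rotors in $B_r$ and is bounded there, cylinder convergence of $\mu_R$ to $\owusf(G)$ (Definition~\ref{definition: wusf}) combined with the two steps above gives
\[ \Eb_\rho\bigl[S_{G,Z_r,n}(\rho,x)\bigr]=\lim_{R\to\infty}\Eb_{\rho_R}\bigl[S_{G_R,Z_r,n}(\rho_R,x)\bigr]\leq\lim_{R\to\infty}n\,\Gc_{G_R,Z_R}(x)=n\,\Gc(x). \]
Letting $r\to\infty$, the same abelian-property argument shows that $S_{G,Z_r,n}(\rho,x)$ is monotone nondecreasing in $r$. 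A short induction on $n$ (base case Corollary~\ref{corollary: RWLM is transient}; the inductive step reads the transience of the $n$th walk from the bound just established being uniform in $r$, together with the pointwise convergence $\sigma_{Z_r}^i(\rho)\to\sigma_\emptyset^i(\rho)$ on any finite set of vertices, which is guaranteed by the transience of the earlier walks) then shows that for almost every $\rho$ all $n$ iterated walks are transient and $S_{G,Z_r,n}(\rho,x)\uparrow S_n(\rho,x)$. The monotone convergence theorem finally yields $\Eb_\rho[S_n(\rho,x)]\leq n\,\Gc(x)$, which is the desired bound.
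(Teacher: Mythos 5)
Your proof is correct and follows the approach the paper leaves implicit (the proposition is stated with proof omitted as ``a similar method'' to Theorem~\ref{theorem: odometer is bounded above by Green function}): iterate Proposition~\ref{proposition: stationarity of wsf for finite graphs} to get the finite-graph $n$-walk equality, pass to the cylinder limit, and apply monotone convergence in $r$. You correctly flagged that sink-monotonicity of $S_{G_R,Z_r,n}$ in $r$ is no longer the trivial ``prefix'' fact it is for $n=1$ (the paper asserts it without comment in Lemma~\ref{lemma: Schramm bound}), and the abelian pause-and-release schedule is exactly the right justification; the inductive transience argument at the end is likewise needed and handled correctly.
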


\section{Convergence in norm of occupation rates}\label{section: convergence in norm of occupation rate}

In this section we prove Theorem~\ref{theorem: Schramm bound convergence in norm},
which shows that
 the occupation rates of the rotor walk whose initial rotor configuration is sampled from $\owusf(G)$ converges in norm to the Green function.  

We restate Theorem~\ref{theorem: Schramm bound convergence in norm} for the convenience of the reader.
\begin{reptheorem}{theorem: Schramm bound convergence in norm}
Let $G$ be a simple connected graph that is locally finite, transient, and vertex-transitive.
Consider any rotor walk on $G$ with initial location $a$ and with empty sink.
Then, for any $x \in V(G)$, 
\[  \lim_{n \to \infty} \Eb_{\rho} \left[ \left|\frac{S_n(\rho,x)}{n}-\Gc(x)\right| \right] = 0,\]
where $\rho$ is sampled from $\owusf(G)$.
\end{reptheorem}

We now build toward the proof of Theorem~\ref{theorem: Schramm bound convergence in norm}.
The main ingredients  are the  the upper bound for $S_n/n$ from Proposition~\ref{proposition: n-th odometer is bounded above by Green function},
and the 
lower bound for $S_n/n$ from the following lemma.

\begin{lemma}\label{lemma: Schramm bound}
Let $G$ be a simple connected graph that is locally finite.
Consider any rotor walk on $G$ with initial location $a$ and with empty sink.
Then, for any initial rotor configuration $\rho$,
\begin{equation*} 
 \liminf_{n \to \infty} \frac{S_{n}(\rho,x)}{n}\geq   \Gc(x) \qquad \forall x \in V(G).  
\end{equation*}
\end{lemma}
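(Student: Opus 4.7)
The plan is to extract $\Gc$ from the occupation rates $f_n(x):=S_n(\rho,x)/n$ by deriving a one-step ``flow inequality'', passing to $g(x):=\liminf_{n\to\infty} f_n(x)\in[0,\infty]$ via Fatou, and then iterating the positive dual transition operator associated with the simple random walk on $G$. If any of the $n$ walks fails to be transient, then $S_n(\rho,x)=\infty$ by Definition~\ref{definition: occupation rate} and the inequality is trivial, so assume every walk is transient. Writing $S_n(\rho,y,x)$ for the number of $(y,x)$-edge traversals across the $n$ walks, conservation of visits at $x$ gives
\[ S_n(\rho,x) = n\,\mathbbm{1}\{a=x\} + \sum_{y\sim x} S_n(\rho,y,x),\]
since each visit to $x$ is either one of the $n$ initial placements (which contributes only when $x=a$) or is preceded by an edge-traversal from a neighbor. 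Because the rotor at $y$ cycles through its $\deg(y)$ neighbors under the $S_n(\rho,y)$ rotations occurring at $y$, the walker exits toward each neighbor at least $\lfloor S_n(\rho,y)/\deg(y)\rfloor \geq S_n(\rho,y)/\deg(y)-1$ times, and dividing by $n$ gives
\[ f_n(x) \geq \mathbbm{1}\{a=x\} + \sum_{y\sim x}\frac{f_n(y)}{\deg(y)} - \frac{\deg(x)}{n}.\]

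Since each $f_n\geq 0$ and the neighbor sum is finite, Fatou's lemma yields $g(x)\geq \mathbbm{1}\{a=x\} + \sum_{y\sim x} g(y)/\deg(y)$ upon letting $n\to\infty$. Let $(Qh)(x):=\sum_{y\sim x} h(y)/\deg(y)$, the adjoint of the simple random walk transition kernel with respect to counting measure on $V(G)$; a direct induction gives $Q^k\delta_a(x)=p^{(k)}(a,x)$, the $k$-step simple random walk transition probability from $a$ to $x$, so $\sum_{k\geq 0} Q^k\delta_a = \Gc$ pointwise and finite by transience of $G$. The Fatou inequality reads $g\geq\delta_a+Qg$, so applying the positivity-preserving operator $Q$ iteratively (using $g\geq 0$) gives
\[ g \geq \sum_{k=0}^{n} Q^k\delta_a + Q^{n+1}g \geq \sum_{k=0}^{n} Q^k\delta_a \qquad (n\geq 0),\]
and letting $n\to\infty$ yields $g(x)\geq\Gc(x)$, which is precisely the lemma.

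The main (modest) technical choice is to use the positivity of $Q$---rather than a minimum-principle argument---to discard the tail $Q^{n+1}g$. This avoids needing the decay $\Gc(y)/\deg(y)\to 0$ at infinity, which can fail on general locally finite transient graphs (for instance on $\Zb^3$ with an infinite path attached at $a$, where $\Gc(y)/\deg(y)$ is constant along the path), and allows $g$ to take the value $+\infty$ at some vertices without further comment.
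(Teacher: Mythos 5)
Your proof is correct, and it takes a genuinely different route from the paper's. The paper proves the lemma by truncating to the finite balls $G_r$ with sink $Z_r$, invoking the exact finite-graph identity $\lim_{n\to\infty} S_{G_r,Z_r,n}/n=\Gc_{G_r,Z_r}$ from Lemma~\ref{lemma: odometer is harmonic for finite graphs} (whose proof relies on eventual periodicity of $\sigma^i(\rho)$ on a finite graph), then using the monotonicity $S_{G,\varnothing,n}\geq S_{G,Z_r,n}$ and letting $r\to\infty$. You instead work entirely on the infinite graph: the flow identity $S_n(\rho,x)=n\mathbbm{1}\{a=x\}+\sum_{y\sim x}S_n(\rho,y,x)$ together with the rotor count $S_n(\rho,y,x)\geq S_n(\rho,y)/\deg(y)-1$ gives the one-step inequality $f_n\geq\delta_a+Qf_n-\deg(\cdot)/n$; a finite-sum liminf passage yields $g\geq\delta_a+Qg$; and iterating the positive operator $Q$ gives $g\geq\sum_{k=0}^n Q^k\delta_a$, hence $g\geq\Gc$. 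Each step checks out: the edge-traversal bound uses precisely that the rotors are not reset between the $n$ walks, the liminf over the finitely many neighbors is legitimate by local finiteness, the identification $Q^k\delta_a=p^{(k)}(a,\cdot)$ is correct for the adjoint of the SRW kernel with respect to counting measure, and the iteration is valid for $[0,\infty]$-valued functions under the monotone, additive operator $Q$. Your approach is more self-contained (it does not invoke Lemma~\ref{lemma: odometer is harmonic for finite graphs} or the Dirichlet uniqueness principle), it transparently covers both the transient and recurrent cases, and, as you note, discarding $Q^{n+1}g\geq 0$ by positivity sidesteps any boundary decay hypotheses that a maximum-principle argument would demand. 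The paper's approach, by contrast, reuses machinery already developed for the finite setting, so its incremental cost within the paper is small. Both are sound.
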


\begin{proof}
Note that if $G$ is a finite graph, then $S_n(\rho,x)=\Gc(x)=\infty$,
and the lemma immediately follows.
We will therefore without loss of generality assume that $G$ is an infinite graph.

Let $r\geq 1$.
Recall that $B_r$ is the set of vertices of $G$ whose graph distance from $a$ is at most $r$, $Z_r$ is the set of vertices whose graph distance  from $a$ is equal to $r$, and $G_r$ is the subgraph of $G$ induced by $B_r$.
Let $\xi$ be the rotor configuration of $G_r$ given by $\xi(x):=\rho(x)$ for all $x \in B_r$.
Now note that the rotor walk on $G_r$ with initial rotor configuration $\xi$  can be coupled  with the rotor walk on $G$ with initial rotor configuration $\rho$,
 provided that both walks are terminated upon hitting $Z_r$.
 Also note that the same observation can be made for the simple random walk on $G_r$ and $G$.
 These observations imply that, for any $x \in B_r$,
 \begin{equation}\label{equation: Schramm bound coupling}
 \frac{S_{G,Z_r,n}(\rho,x)}{n}= \frac{S_{G_r,Z_r,n}(\xi,x)}{n}; \quad \text{and} \quad \Gc_{G,Z_r}(x)=\Gc_{G_r,Z_r}(x).  
 \end{equation}

Now note that $G_r$ is a finite graph  and $Z_r$ is a nonempty set (as $G$ is infinite).
It then follows from Lemma~\ref{lemma: odometer is harmonic for finite graphs} that
 \[ \lim_{n \to \infty} \frac{S_{G_r,Z_r,n}(\xi,x)}{n}= \Gc_{G_r,Z_r}(x).\]
Together with  \eqref{equation: Schramm bound coupling}, this implies that 
\begin{equation}\label{equation: Schramm bound stopped at Z_r}
\lim_{n \to \infty} \frac{S_{G,Z_r,n}(\rho,x)}{n}= \lim_{n \to \infty}\frac{S_{G_r,Z_r,n}(\xi,x)}{n} =\Gc_{G_r,Z_r}(x)=\Gc_{G,Z_r}(x).
\end{equation}

Now  note that occupation rates can only decrease as the sink grows, which gives us ${S_{G,\varnothing,n}(\rho,x)}\geq {S_{G,Z_r,n}(\rho,x)}$. 
Together with  \eqref{equation: Schramm bound stopped at Z_r}, this implies that
\[ \liminf_{n \to \infty}  \frac{S_{G,\varnothing,n}(\rho,x)}{n}\geq \liminf_{n \to \infty} \frac{S_{G,Z_r,n}(\rho,x)}{n}= \Gc_{G_r,Z_r}(x).\]
The lemma now follows by taking the limit of the inequality above as $r \to \infty$.
\end{proof}

We now present the proof of Theorem~\ref{theorem: Schramm bound convergence in norm}.

\begin{proof}[Proof of Theorem~\ref{theorem: Schramm bound convergence in norm}]
Let $\epsilon>0$ be an arbitrary positive real number.
Let $g_{\epsilon}:=\Gc(x)-\epsilon$, and let $A_{n,\epsilon}$ be the set of rotor configurations given by
\[A_{n,\epsilon}:=\left\{ \rho \ \bigg| \  \frac{S_n(\rho,x)}{n}\geq g_\epsilon   \right\}.  \]

Note that
\begin{align*}
&\Eb_{\rho} \left[ \left|\frac{S_n(\rho,x)}{n} -\Gc(x)\right|\right]\leq  \Eb_{\rho} \left[ \left|\frac{S_n(\rho,x)}{n} -g_\epsilon\right|\right] +\epsilon\\
= &\Eb_{\rho} \left[ \mathbbm{1}_{A_{n,\epsilon}}\left(\frac{S_n(\rho,x)}{n} -g_\epsilon\right)\right]
+\Eb_{\rho} \left[ \mathbbm{1}_{A_{n,\epsilon}^c}\left(g_\epsilon- \frac{S_n(\rho,x)}{n} \right)\right]+\epsilon\\ 
=& \Eb_{\rho} \left[ \left(\mathbbm{1}_{A_{n,\epsilon}} -\mathbbm{1}_{A_{n,\epsilon}^c}\right)\frac{S_n(\rho,x)}{n} \right]- g_\epsilon \left(2\Pb_{\rho}[A_{n,\epsilon}]-1 \right)+\epsilon\\
\leq &\Eb_{\rho} \left[ \frac{S_n(\rho,x)}{n} \right]- g_\epsilon \left(2\Pb_{\rho}[A_{n,\epsilon}] -1\right)+\epsilon.
\end{align*}
Together with Proposition~\ref{proposition: n-th odometer is bounded above by Green function},
the inequality above implies that
\begin{equation}\label{equation: convergence in norm}
\lim_{n \to \infty} \Eb_{\rho} \left[ \left|\frac{S_n(\rho,x)}{n} -\Gc(x)\right|\right] \leq \Gc(x)-  g_\epsilon \left(2 \lim_{n \to \infty} \Pb_{\rho}[A_{n,\epsilon}]-1 \right)+\epsilon.
\end{equation}
Now note that 
we have  $\lim_{n \to \infty} \Pb_{\rho}[A_{n,\epsilon}] \to 1$ as $\epsilon \to 0$   by Lemma~\ref{lemma: Schramm bound}.
This implies that the right side of  \eqref{equation: convergence in norm}  tends to $0$ as $\epsilon\to 0$, and the theorem now follows.
\end{proof}
\section{Rotor walk stationarity}
\label{section: rotor walk stationarity}
In this section we continue our investigation of random walks whose initial rotor configuration is sampled from $\owusf(G)$,
and we are interested in checking if $\owusf(G)$ is  a stationary distribution of the rotor walk.



Recall the definition of the final rotor configuration $\sigma(\rho)$ from Definition~\ref{definition: final rotor configuration}.
\begin{definition}[Rotor walk stationarity]\label{definition: RWLM-stationary}
A probability distribution $\mu$ on rotor configurations of $G$ is \emph{rotor walk stationary} with respect to  a given rotor walk if
\begin{enumerate}
\item For almost every rotor configuration $\rho$ sampled from $\mu$, the corresponding rotor walk is transient; and
\item If the initial configuration $\rho$ is sampled from $\mu$, then the final rotor configuration $\sigma(\rho)$ also follows the law of $\mu$.  \qedhere
\end{enumerate}
\end{definition}

The oriented wired uniform spanning forest $\owusf(G)$ satisfies the first condition  by Corollary~\ref{corollary: RWLM is transient},
so it is a natural candidate for a distribution that is  rotor walk stationary.
As it turns out, there are examples for which $\owusf(G)$ is indeed rotor walk stationary (e.g. for rotor walks on the $b$-ary tree $\Tb_b$ $(b \geq 2)$, as we will prove in Section~\ref{section: rotor walk for b-ary trees}),
but there are also examples for which this fails, as shown in Figure~\ref{figure: odometer bound strict inequality}~(Section~\ref{section: intro}).

We now present an extension of Theorem~\ref{theorem: two TFAE} that gives  two different conditions that are equivalent to $\owusf(G)$ being  stationary.
Recall the definition of the odometer $u$ (Definition~\ref{definition: odometer}) and the Green function $\Gc$ (Definition~\ref{definition: Green function}).
\begin{theorem}\label{theorem: three TFAE}
Let $G$ be a   simple connected graph
that is locally finite and transient.
Consider any  rotor walk on $G$ with  initial location $a$ and with empty sink.
The following are equivalent:
\begin{enumerate}
\item[\mylabel{item: S1}{\textnormal{(S1)}}] $\owusf(G)$ is  rotor walk stationary.


\item[\mylabel{item: S2}{\textnormal{(S2)}}] We have $\Eb_\rho[u(\rho,x)]= \Gc(x)$ for any $x \in V(G)$, where $\rho$ is sampled from $\owusf(G)$.

\item[\mylabel{item: S3}{\textnormal{(S3)}}] 
For any $\epsilon >0$ and any $s>0$,
we have for sufficiently large $r$ that
\[ \lim_{R \to \infty} \Pb [\{ X_t^{(R)} \mid t \leq   t_R(s) \} \subseteq   B_r]\geq  1-\epsilon,   \]
where $(X_t^{(R)},\rho_t^{(R)})$ is the rotor walk on $G_R$ with initial location $a$, with initial rotor configuration sampled from $\ousf(G_R,Z_R)$, and with sink $Z_R$.
The integer 
 $t_R(s)$ is the last time this rotor walk visits the ball $B_s$. 

%
\end{enumerate}
\end{theorem}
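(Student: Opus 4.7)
The plan is to prove the equivalence via the cycle \ref{item: S1} $\Rightarrow$ \ref{item: S2} $\Rightarrow$ \ref{item: S3} $\Rightarrow$ \ref{item: S1}, with \ref{item: S3} playing the role of the precise tightness hypothesis needed to lift the finite-volume stationarity of Proposition~\ref{proposition: stationarity of wsf for finite graphs} to an infinite-volume statement. Throughout, the finite-volume identities on $G_R$ (Propositions~\ref{proposition: stationarity of wsf for finite graphs} and~\ref{proposition: Green function equality  for finite graphs}) are to be transported to $G$ via the defining relation $\owusf[B \subseteq F] = \lim_{R \to \infty} \mu_R[B \subseteq F_R]$, in the same spirit as the proof of Theorem~\ref{theorem: odometer is bounded above by Green function}.

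For \ref{item: S1} $\Rightarrow$ \ref{item: S2}, stationarity yields $\sigma^i(\rho) \overset{d}{=} \rho$ for every $i \geq 0$, so linearity gives $\Eb_\rho[S_n(\rho,x)/n] = \Eb_\rho[u(\rho,x)]$; combining the Schramm lower bound (Lemma~\ref{lemma: Schramm bound}) with Fatou's lemma then produces $\Eb_\rho[u(\rho,x)] \geq \Gc(x)$, while the reverse inequality is Theorem~\ref{theorem: odometer is bounded above by Green function}. For \ref{item: S2} $\Rightarrow$ \ref{item: S3}, the plan is to apply Markov's inequality to the integer-valued quantity
\[ D_{R,r,s} := \sum_{x \in B_s}\bigl(u_{G_R,Z_R}(\rho_R,x) - u_{G_R,Z_r}(\rho_R,x)\bigr), \]
which counts visits of the $G_R$-walk to $B_s$ after it first touches $Z_r$. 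Proposition~\ref{proposition: Green function equality  for finite graphs} applied on $G_R$, together with the fact that $u_{G_R,Z_r}(\rho_R,x)$ depends only on the finitely many rotors $\rho_R|_{B_r \setminus Z_r}$ whose joint law converges by definition of $\owusf(G)$, yields $\Eb[D_{R,r,s}] \to \sum_{x \in B_s}(\Gc(x) - \Eb_\rho[u_{G,Z_r}(\rho,x)])$ as $R \to \infty$; under \ref{item: S2}, monotone convergence drives the right-hand side to $0$ as $r \to \infty$. Hence $\{D_{R,r,s} = 0\}$ holds with probability at least $1 - \epsilon$ for $r$ and $R$ large, and on this event the last visit of the $G_R$-walk to $B_s$ precedes its first visit to $Z_r$, which is a sub-event of the one in \ref{item: S3}.

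The main step is \ref{item: S3} $\Rightarrow$ \ref{item: S1}. Fix a finite directed edge set $E_0$ whose tails all lie in some $B_s$; the target is $\owusf[E_0 \subseteq \sigma(\rho)] = \owusf[E_0 \subseteq \rho]$. Proposition~\ref{proposition: stationarity of wsf for finite graphs} on $G_R$ gives $\mu_R[E_0 \subseteq \sigma_R(\rho_R)] = \mu_R[E_0 \subseteq \rho_R]$, and the right-hand side converges to $\owusf[E_0 \subseteq \rho]$ by definition of $\owusf(G)$, so the task reduces to showing $\lim_{R \to \infty} \mu_R[E_0 \subseteq \sigma_R(\rho_R)] = \owusf[E_0 \subseteq \sigma(\rho)]$. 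The key device will be a deterministic map $\Phi_r$ sending a rotor configuration $\xi$ on $B_r$ to the final configuration on $B_s$ produced by running the rotor walk from $a$ with initial rotors $\xi$ until it first leaves $B_r$; this map is well-defined almost surely on both sides, because the $G$-walk is transient by Corollary~\ref{corollary: RWLM is transient} and the $G_R$-walk is absorbed at $Z_R$. The crucial observation is that on the event in \ref{item: S3} the $G_R$-walk accumulates all its $B_s$-visits before ever leaving $B_r$, so $\sigma_R(\rho_R)|_{B_s} = \Phi_r(\rho_R|_{B_r})$; the analogous identity $\sigma(\rho)|_{B_s} = \Phi_r(\rho|_{B_r})$ holds for the $G$-walk on the event that its trajectory up to the last $B_s$-visit is contained in $B_r$, an event whose probability tends to $1$ as $r \to \infty$ by the transience of the $G$-walk. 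Since $\Phi_r$ is a function of only finitely many rotors and $\rho_R|_{B_r}$ converges in distribution to $\rho|_{B_r}$ by the very definition of $\owusf(G)$, convergence of $\mu_R[E_0 \subseteq \Phi_r(\rho_R|_{B_r})]$ to $\owusf[E_0 \subseteq \Phi_r(\rho|_{B_r})]$ is automatic on this finite configuration space, which closes the cycle up to an arbitrary $\epsilon > 0$.

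The main obstacle will be verifying the identity $\sigma_R(\rho_R)|_{B_s} = \Phi_r(\rho_R|_{B_r})$ on the event in \ref{item: S3}, because \ref{item: S3} allows the $G_R$-walk to touch $Z_r$ multiple times before $t_R(s)$ without necessarily crossing into $V(G_R) \setminus B_r$, so the quantity $D_{R,r,s}$ from the previous step need not vanish. The resolution is that $t_R(s)$ is by definition the \emph{last} visit of the walker to $B_s$, so no rotor at any $x \in B_s$ is modified after $t_R(s)$; hence the restriction of the $G_R$-walk to $[0, t_R(s)]$, which stays in $B_r$ on the event in \ref{item: S3}, coincides with the deterministic walk defining $\Phi_r$ up to the latter's first exit from $B_r$, and this is enough to match the number of $B_s$-visits, and therefore the final rotor at each $x \in B_s$.
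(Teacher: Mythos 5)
Your proof is correct and follows the same cycle of implications as the paper, with the same key ingredients: Fatou's lemma plus Schramm's lower bound (Lemma~\ref{lemma: Schramm bound}) for \ref{item: S1}$\Rightarrow$\ref{item: S2}; Markov's inequality applied to $u_{G_R,Z_R}-u_{G_R,Z_r}$ plus Proposition~\ref{proposition: Green function equality  for finite graphs} and monotone convergence for \ref{item: S2}$\Rightarrow$\ref{item: S3}; and a finite-range approximation plus Proposition~\ref{proposition: stationarity of wsf for finite graphs} and the weak convergence \eqref{equation: limit definition wusf} for \ref{item: S3}$\Rightarrow$\ref{item: S1}. Your explicit map $\Phi_r$ (running the walk until it first \emph{exits} $B_r$) is a genuine refinement of the last step: the paper's own proof implicitly treats the event in \ref{item: S3}, namely $\{X_t^{(R)} : t\leq t_R(s)\}\subseteq B_r$, as equivalent to ``the walker never revisits $B_s$ after first \emph{hitting} $Z_r$,'' yet since $Z_r\subseteq B_r$ only the second implies the first; your $\Phi_r$ sidesteps this subtlety because it is keyed to exactly the event that \ref{item: S3} controls, whereas the paper's version requires a one-unit shift of $r$ to be fully rigorous.
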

See Figure~\ref{figure: excursions with three balls} for an illustration of condition \ref{item: S3}.

\begin{figure}[ht!]
\includegraphics[scale=1]{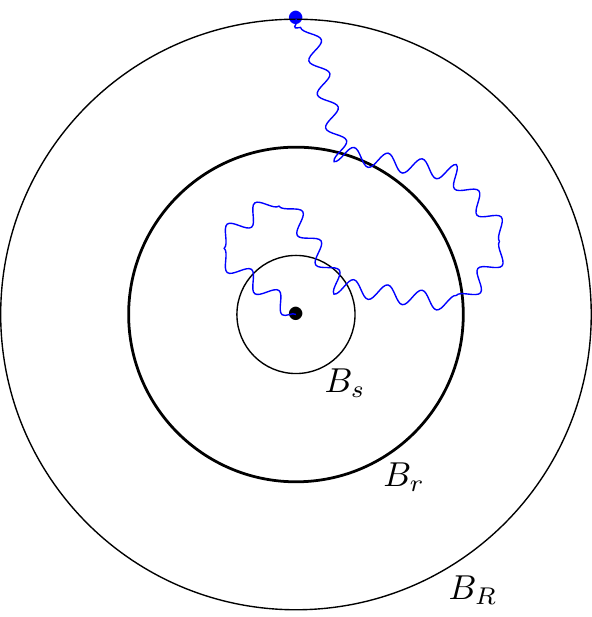}
\caption{An instance of a rotor walk $(X_t^{(R)},\rho_t^{(R)})_{t \geq 0}$ terminated upon visiting the boundary $Z_R$ of the ball $B_R$, where the trajectory of the walker is given by the (blue) squiggly path.
Here  the last visit to the ball $B_s$ is before the first visit to the boundary  $Z_r$ of the ball $B_r$,
and therefore  terminating this walk prematurely upon visiting $Z_r$ (instead of $Z_R$) will not change the rotor of vertices in $B_s$ in the final rotor configuration.
 }
 \label{figure: excursions with three balls}
\end{figure}

Condition~\ref{item: S2} is useful for  deriving other results provided that we already  know that $\owusf(G)$ is rotor walk stationary; 
 Theorem~\ref{theorem: Schramm's bound for rotor walk stationary graphs} will be  proved in  this way.
Condition~\ref{item: S3} is useful for checking  rotor walk stationarity
as it reduces the problem to rotor walks on finite graphs, which is more well-studied in the literature; 
Proposition~\ref{proposition: tree stationarity} in Section~\ref{section: rotor walk for b-ary trees} will be proved in this way.

We now provide a sketch of  how  \ref{item: S2} and \ref{item: S3} imply the rotor walk stationarity of $\owusf(G)$.
The idea is to relate the rotor walk on $G$ to the rotor walk on its exhaustion $(G_R)_{R \geq 0}$.
We first   approximate the rotor walks on those graphs uniformly by the rotor walks that is terminated upon visiting the boundary  of the ball $B_r$  for a fixed radius $r>0$ that is sufficiently large.
The latter walk in turn depends only on rotors of (finitely many) vertices in $B_r$.
It then follows from  \eqref{equation: limit definition wusf} that  the  rotor walk on $G$ with sink $Z_r=\partial B_r$
can be taken as the limit of the rotor walk on $G_R$ with the same sink $Z_r$ as $R \to \infty$.
The stationarity of the wired uniform spanning forest for the rotor walk on $G$ then follows as the consequence of the stationarity of the uniform spannning forest for rotor walks on the finite graphs $(G_R)_{R\geq 0}$ (Proposition~\ref{proposition: tree stationarity}).

The crucial step here  is to find the radius $r>0$ such that the rotor walks on $G_R$ with sink $Z_R$ can be uniformly approximated by the (shorter) rotor walks with sink $Z_r$.
Indeed, we will see  that condition~\ref{item: S2} and \ref{item: S3} are essentially equivalent to requiring that such a radius exists.
Note that such a radius does not always exist, as can be seen from the following example.

\begin{example}
Let $G$ be the $2$-ary tree $\Tb_2$ with an infinite path attached to its root from Figure~\ref{figure: odometer bound strict inequality}.
That is, 
\begin{align*}
V(G):=& V(\Tb_2)  \cup \{y_i \mid i\geq 0  \};\\
E(G):=& E(\Tb_2) \cup \{\{o, y_0\}\}  \cup\{ \{y_i,y_{i+1}\} \mid i \geq 0\},
\end{align*}
where $o$ is the root of $\Tb_2$.

We will perform two rotor walks on $G_R$ $(R\geq 0)$.
Both walks have the same 
  initial location $y_0$ and  the same  initial rotor configuration $\rho_R$ sampled from $\ousf(G_R,Z_R)$, but  with two different choices for the sink; see Figure~\ref{figure: rotor walks with two sinks}.

\begin{figure}[ht!]
\centering
\begin{tabular}{c  }
\includegraphics[scale=1]{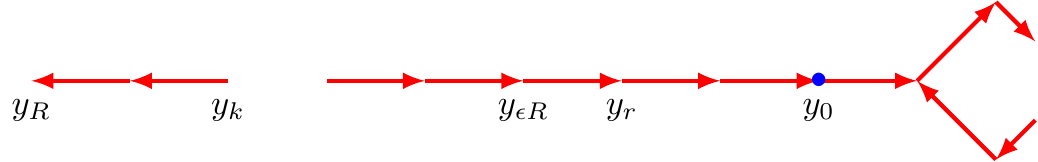}  \\
(a) 
\\ \vspace{0.1 cm}\\
\includegraphics[scale=1]{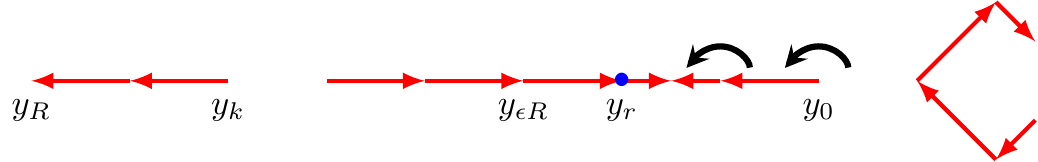}  \\
(b)\\
\\ \vspace{0.1 cm}\\
\includegraphics[scale=1]{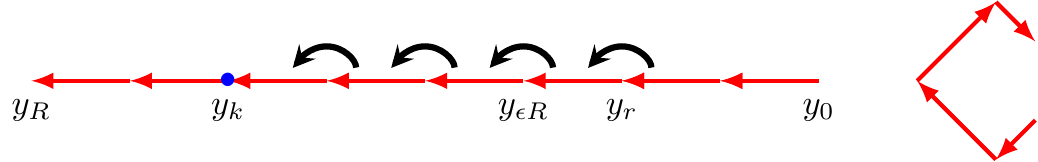}  \\
(c)
\\ \vspace{0.1 cm}\\
\includegraphics[scale=1]{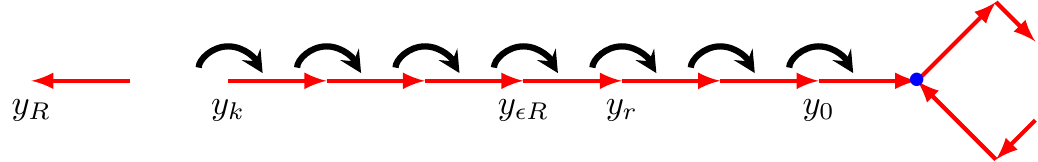}  \\
\end{tabular}
\caption{(a) An initial rotor configuration sampled from $\ousf(G_R,Z_R)$ with a walker initially located at $y_0$.
(b)First, the walker walks toward $y_R$ until it is  stopped at  $y_r \in Z_r$.
(c) Then, the  walker  resumes walking toward $y_R$ until it reaches $y_k$.
(d) Finally, the walker  walks toward the root until it reaches the root.
 }
 \label{figure: rotor walks with two sinks}
\end{figure}

First, consider the rotor walk on $G_R$ terminated upon visiting $Z_r$, where $r$ is a fixed integer.
As $\rho_R$ is sampled from $\ousf(G_R,Z_R)$,
 we have with probability approximately $1-\epsilon$  $(\epsilon>0$) that 
 \[\rho(y_{i+1})=y_{i}  \qquad \forall \ i \leq \epsilon R.\]
It then follows that the walker will walk toward $y_{{R}}$ 
for the first $\epsilon R$ steps of the rotor walk; see Figure~\ref{figure: rotor walks with two sinks}(b).
Since $r \leq \epsilon R$ for sufficiently large $R$,
this rotor walk will terminate in less than $\epsilon R$ steps as it has visited $y_r \in Z_r$ by then.
In particular, 
 this implies that, with probability close to $1$, 
 we have
 \begin{equation}\label{equation: example dynamics that go wrong 1}
  u_{G_R,Z_r}(\rho_R,y_0)=1,   
\end{equation} 
as this walk visits $y_0$ exactly once (namely at the $0$-th step of the walk).

Now, consider the rotor walk on $G_R$ terminated upon visiting $Z_R$.
As $\rho_R$ is sampled from $\ousf(G_R,Z_R)$,
we have 
 with probability approximately $1-\frac{1}{R}$ that
  \[\exists \ i \leq  R \quad \text{s.t.} \quad \rho(y_{i})=y_{i+1}.\]
Let $k$ be the smallest positive integer satisfying this property.
It then follows that
the walker will walk toward $y_R$ for the first $k$ steps of the walk, then turn to  walk toward the root for the next $k+1$ steps; see Figure~\ref{figure: rotor walks with two sinks}(c) and \ref{figure: rotor walks with two sinks}(d).
Also note that this rotor walk will not terminate before the first $2k+1$ steps as it has not visited $Z_R$ yet.
In particular, this implies that, with probability close to 1,
we have 
 \begin{equation}\label{equation: example dynamics that go wrong 2}
  u_{G_R,Z_R}(\rho_R,y_0)\geq 2,   
\end{equation} 
as this walk has visited $y_0$ at least twice 
(namely at the $0$-th and $2k$-th step of the walk). 

Hence we conclude from  \eqref{equation: example dynamics that go wrong 1} and \eqref{equation: example dynamics that go wrong 2} that 
the rotor walks on $G_R$ with sink $Z_R$ cannot be uniformly approximated by the rotor walks with sink $Z_r$
for any fixed $r\geq 0$.
\end{example}

We now build present the proof of the first part of Theorem~\ref{theorem: three TFAE}.
Recall the definition of occupation rate ${S_n}/{n}$ from Definition~\ref{definition: occupation rate}.


\begin{proof}[Proof of \ref{item: S1} implies \ref{item: S2}]
Since  $\owusf(G)$ is rotor walk stationary, we have:
\[  \Eb_\rho[{u(\rho,x)}]=\sum_{i=0}^{n-1}\Eb_\rho \left[ \frac{u(\sigma^i(\rho),x)}{n}\right]=  \Eb_\rho\left[\frac{S_n(\rho,x)}{n}\right]. 
  \]
We also have by  Lemma~\ref{lemma: Schramm bound}
  that 
\[ \liminf_{n \to \infty} \frac{S_n(\rho,x)}{n} \geq \Gc(x),  \]  
for any rotor configuration $\rho$.
These two observations give us:
 \[  \Eb_\rho[{u(\rho,x)}]= \liminf_{n \to \infty}  \Eb_\rho\left[\frac{S_n(\rho,x)}{n} \right]\geq   \Eb_\rho\left[ \liminf_{n \to \infty}\frac{S_n(\rho,x)}{n} \right] \geq  \Gc(x), \]
where the first inequality is due to Fatou's lemma.
  Finally,  we have from Theorem~\ref{theorem: odometer is bounded above by Green function} that
  \[   \Eb_\rho[{u(\rho,x)}] \leq  \Gc(x).\]
  Hence we conclude that  $\Eb_\rho[u(\rho,x)]=\Gc(x)$, as desired.
\end{proof}


We now present the proof of the second part of Theorem~\ref{theorem: three TFAE}.

\begin{proof}[Proof of \ref{item: S2} implies \ref{item: S3}]
%
%
%
%

Let  $u(R,r):=u_{G_R,Z_r}(\rho_R, B_s)$ be the number of visits to the ball $B_s$ by the rotor walk $(X_t^{(R)})_{t \geq 0}$  that is terminated strictly before hitting $Z_r$. 
Note that 
the set 
of vertices visited before the last visit to  $B_s$ 
is contained in the ball $B_r$
if and only if
the walker never comes back to visit $B_s$ after hitting the boundary $Z_r=\partial B_r$ of the ball $B_r$ (see Figure~\ref{figure: excursions with three balls}).
This happens if and only if 
 the number of visits to $B_s$
by the rotor walk terminated upon hitting $Z_r$ is equal to the same number if the rotor walk is not terminated prematurely.
%
%
%
%
%
That is to say, for $r\leq R$,
\[  \Pb [\{ X_t^{(R)} \mid t \leq   t_R(s) \} \subseteq   B_r]=\Eb_{\rho_R}\left[\mathbbm{1}  \left\{u(R,R)- u(R,r) = 0 \right\}  \right].
  \]
Now note that 
\begin{equation*}
\begin{split}
\Eb_{\rho_R}\left[\mathbbm{1}  \left\{u(R,R)- u(R,r) = 0 \right\} \right]  \geq   1- \Eb_{\rho_R}[u(R,R)] - \Eb_{\rho_R}[u(R,r)].
\end{split}
\end{equation*}
It then suffices to show that $\lim_{R \to \infty}\Eb_{\rho_R}[u(R,R)] - \Eb_{\rho_R}[u(R,r)]\leq \epsilon$.

Now note that, we have by the stationarity of $\ousf(G_R,Z_R)$ for rotor walks on finite graphs (Proposition~\ref{proposition: Green function equality  for finite graphs}) that:
  \[\Eb_{\rho_R}[u(R,R)]= \Eb_{\rho_R}[u_{G_R,Z_R}(\rho_R, B_s)]=\Gc_{G_R,Z_R}(B_s).\] 
By taking the limit as $R \to \infty$, we get
\begin{equation}\label{equation: LV FV 2}
 \lim_{R\to \infty}  \Eb_{\rho_R}[u(R,R)]= \Gc_{G,\varnothing}(B_s).  
 \end{equation}

On the other hand, 
 the number of visits to $B_s$ strictly before the walker hits $Z_r$ is an event that only depends on the rotors in the ball $B_r$ (of which there are only finitely many of them).
Hence we have by  \eqref{equation: limit definition wusf} that
\begin{equation*}
 \lim_{R\to \infty} \Eb_{\rho_R}[u(R,r)]=\lim_{R\to \infty}  \Eb_{\rho_R}[u_{G_R,Z_r}(\rho_R, B_s)]=  \Eb_{\rho}[u_{G,Z_r}(\rho, B_s)].
\end{equation*}
Now note that $u_{G,Z_r}$ increases to $u_{G,\varnothing}$ as $r \to \infty$.
By the monotone convergence theorem, we then have for sufficiently large $r$ that 
\[\Eb_{\rho}[u_{G,Z_r}(\rho, B_s)] \geq  \Eb_{\rho}[u_{G,\varnothing}(\rho, B_s)]-\epsilon. \]
Together with condition \ref{item: S2} that  $\Eb_{\rho}[u_{G,\varnothing}(\rho, B_s)]=\Gc_{G,\varnothing}(B_s)$,
the two observations above imply that  
\begin{equation}\label{equation: LV FV 3}
 \lim_{R\to \infty}  \Eb_{\rho_R}[u(R,r)] \geq \Gc_{G,\varnothing}(B_s)-\epsilon. 
\end{equation}

Subtracting \eqref{equation: LV FV 3} from  \eqref{equation: LV FV 2}, we get
\[\lim_{R \to \infty}\Eb_{\rho_R}[u(R,R)] - \Eb_{\rho_R}[u(R,r)]\leq \epsilon, \]
as desired.
\end{proof}

We now present the proof of the last part of Theorem~\ref{theorem: three TFAE}.

\begin{proof}[Proof of \ref{item: S3} implies \ref{item: S1}]
It  suffices to show that
$\Pb_{\rho}[B \subseteq \sigma_G(\rho)]=\Pb_{\rho}[B \subseteq \rho]$
for any  finite set $B$ of directed edges of $G$.

Let $\epsilon>0$ be any positive real number.
Let $s$ be the smallest integer such that all vertices incident to $B$ are contained in $B_s$.
Consider the rotor walk on $G$ with initial rotor configuration $\rho$ and with empty sink.
Since this walk is transient almost surely  (by Corollary~\ref{corollary: RWLM is transient}),
the probability that the walker returns to visit $B_s$ again  after hitting $Z_r=\partial B(a,r)$ converges to $0$ as $r \to \infty$.
Also note that the rotors in the ball $B_s$ will stay constant if the walker never returns to visit $B_s$ again.
Hence, for sufficiently large $r\geq s$, we have:
\begin{equation*}
  \left|\Pb_{\rho}[B \subseteq \sigma_G(\rho)]-\Pb_{\rho}[B \subseteq \sigma_{G,Z_r}(\rho)]\right| \leq \frac{\epsilon}{2}.
  \end{equation*}

Now note that the rotors of $\sigma_{G,Z_r}(\rho)$ in $B_s$ depends only at the rotors of $\rho$ in the ball $B_r$ as the walk is terminated upon hitting $Z_r$.
Since this is a finite set,
we have by  \eqref{equation: limit definition wusf} that
\begin{equation*}
\Pb_{\rho}[B \subseteq \sigma_{G,Z_r}(\rho)]=\lim_{R \to \infty} \Pb_{\rho_R}[B \subseteq \sigma_{G_R,Z_r}(\rho_R)].
\end{equation*}
It then suffices to show that
\begin{equation*}\label{equation: S3 implies S1 2}
\left| \lim_{R \to \infty} \Pb_{\rho_R}[B \subseteq \sigma_{G_R,Z_r}(\rho_R)]- \Pb_{\rho}[B \subseteq \rho] \right| \leq \frac{\epsilon}{2}.
\end{equation*}

Now consider the rotor walk on $G_R$ with initial rotor configuration $\rho_R$ that is terminated upon hitting $Z_R$.
Suppose that  the walk never returns to visit $B_s$ again after it hits $Z_r$.
Then the rotors in the ball $B_s$ of the final rotor configuration remains unchanged even if the walk is terminated  prematurely upon visiting $Z_r$ (see Figure~\ref{figure: excursions with three balls}).
Since $B$ is contained in $B_s$,
this means that $B$ is contained in $\sigma_{G_R,Z_r}(\rho_R)$
 if and only if  $B$ is contained in $\sigma_{G_R,Z_R}(\rho_R)$.
 Hence we have:
 \begin{align*} &\{ X_t^{(R)} \mid t \leq   t_R(s) \} \subseteq   B_r \quad \Rightarrow  \quad \mathbbm{1}\{B \subseteq \sigma_{G_R,Z_r}(\rho_R)\} =\mathbbm{1}\{B \subseteq \sigma_{G_R,Z_R}(\rho_R)\}. 
\end{align*}
Now note that by  \ref{item: S3}  the event $\{ X_t^{(R)} \mid t \leq   t_R(s) \}$  occurs with probability at least $1-\frac{\epsilon}{2}$ for sufficiently large $r$.
It then follows that, for sufficiently large $r$,
 \begin{align*} \left| \Pb_{\rho_R}[ B \subseteq \sigma_{G_R,Z_r}(\rho_R) ]-\Pb_{\rho_R}[B \subseteq \sigma_{G_R,Z_R}(\rho_R)]\right| 
 \leq \frac{\epsilon}{2}.
\end{align*}
On the other hand, the rotor configuration $\sigma_{G_R,Z_R}(\rho_R)$ has the same law as $\rho_R$ by the  
stationarity of $\ousf(G_R,Z_R)$ for rotor walks on finite graphs~(Proposition~\ref{proposition: stationarity of wsf for finite graphs}).
These two facts then imply that:
\begin{equation*}
\left| \Pb_{\rho_R}[ B \subseteq \sigma_{G_R,Z_r}(\rho_R) ]-\Pb_{\rho_R}[B \subseteq \rho_R]\right|\leq \frac{\epsilon}{2}.
\end{equation*}
Taking the limit of the inequality above as $R \to \infty$
and  then applying 
  \eqref{equation: limit definition wusf}  to  $\lim_{R \to \infty} \Pb_{\rho_R}[B \subseteq \rho_R]$, we then conclude that
\begin{equation*}
\left| \lim_{R \to \infty} \Pb_{\rho_R}[B \subseteq \sigma_{G_R,Z_r}(\rho_R)]-  \Pb_{\rho}[B \subseteq \rho] \right| \leq \frac{\epsilon}{2}.
\end{equation*}
This completes the proof.
\end{proof}


\section{A sufficient condition for rotor walk stationarity}\label{section: rotor walk for b-ary trees}
In this section we show that the oriented wired spanning forest is always rotor walk stationary 
for a family of trees that includes 
  the $b$-ary tree $\Tb_{b}$ ($b\geq 2$).
We will need the following notations to describe this family of trees.

Let $\rho$ be a rotor configuration that is an oriented spanning forest of $G$
(recall that we consider $\rho$ both as a rotor configuration and an oriented subgraph of $G$).
An \emph{backward path} (resp. \emph{forward path}) in $\rho$  is a sequence $\langle x_0,x_1,x_2,\ldots\rangle$ such that $\rho(x_{i+1})=x_{i}$ (resp. $\rho(x_{i})=x_{i+1}$)  for every $i \geq 0$.
A path is \emph{infinite} if it contains infinitely many vertices.

Since $\rho$ is an oriented spanning forest,
for each vertex $a$  the subgraph $\rho$ has a unique oriented tree that contains $a$,
and this oriented tree has a unique maximal forward path that starts at $a$.
We denote by $T(a,\rho)$ this unique tree, and by $P(a,\rho)$ this unique maximal forward path.
%
%
%

A vertex $x$ of $G$ is \emph{complete} in $\rho$ if 
$T(x,\rho)$ contains  all neighbors of $x$ in $G$;
and is \emph{incomplete} otherwise. 

%

\begin{proposition}\label{proposition: tree stationarity}
Let $G$ be a tree that is locally finite and transient, and let $a$ be a vertex of $G$.
Consider any rotor walk on $G$ with initial location $a$ and with empty sink.
Suppose that the rotor configuration $\rho$ sampled from $\owusf(G)$ satisfies these two conditions almost surely:
\begin{enumerate}
\item \label{item: tree stationarity 1} $T(a,\rho)$  has no infinite backward path; and
\item \label{item: tree stationarity 2}   There are infinitely many incomplete vertices in $P(a,\rho)$.
\end{enumerate}
Then
$\owusf(G)$ is rotor walk stationary.
\end{proposition}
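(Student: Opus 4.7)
The plan is to verify condition~\ref{item: S3} of Theorem~\ref{theorem: three TFAE}. Fix $s \geq 1$ and $\epsilon > 0$; I need to produce $r \geq s$ such that
\[
\liminf_{R \to \infty} \Pb\!\left[\{X_t^{(R)} : t \leq t_R(s)\} \subseteq B_r\right] \geq 1 - \epsilon.
\]
My approach is to first analyze the rotor walk on the infinite tree $G$ (with empty sink) under hypotheses (i) and (ii), and then transfer the resulting tightness event to the finite exhaustion $\mu_R = \ousf(G_R, Z_R)$ using the cylinder-convergence~\eqref{equation: limit definition wusf}. The central tool is the rigid combinatorial structure of rotor walks on trees: removing any vertex $x$ disconnects $G$ into subtrees indexed by $N(x)$, and the walker's successive departures from $x$ enter these subtrees in the cyclic order prescribed by $\tau_x$; in particular, every excursion from $x$ must return through the edge on which it left, or else not return at all.

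The key step is a deterministic structural claim: for any $\rho \in \SF(G)$ satisfying (i) and (ii) whose associated rotor walk on $G$ from $a$ is transient, the trajectory up to the last visit to $B_s$ is confined to a random finite ball $B_{r(\rho)}$, with $r(\rho)$ tight under $\owusf(G)$. I would prove this by induction on the walker's forward progress along $P(a, \rho) = (a = p_0, p_1, p_2, \ldots)$. Condition (i) ensures that every backward subtree of $T(a,\rho)$ hanging off $P(a,\rho)$ is finite; by the rigidity above, every excursion from $p_n$ into such a backward subtree terminates in finite time and returns to $p_n$. Condition (ii) provides infinitely many incomplete $p_n$, giving the walker escape routes into components of $\rho$ distinct from $T(a,\rho)$. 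Combined with transience (Corollary~\ref{corollary: RWLM is transient}), these features force the walker's forward position along $P(a,\rho)$ to advance irreversibly: at each $p_n$, after finitely many excursions have been completed, the walker either escapes to infinity through one of them or advances to $p_{n+1}$ and does not return to $p_n$.

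Granting the deterministic claim, tightness yields an $r \geq s$ such that with $\owusf(G)$-probability at least $1 - \epsilon/2$, the trajectory of the $G$-walk up to its last visit to $B_s$ lies inside $B_r$. For $R$ much larger than $r$, I would couple the rotor walks on $G$ and on $G_R$ so that they share the initial rotors in $B_r$ (via \eqref{equation: limit definition wusf}) and hence have identical trajectories up to the first exit from $B_r$; by transience this first exit on $G$ occurs after the last visit to $B_s$ with probability at least $1 - \epsilon/2$, while on $G_R$ this exit precedes termination at $Z_R$. Combining these observations transfers the tightness bound from $\owusf(G)$ to $\mu_R$, which yields~\ref{item: S3}. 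The main obstacle is the deterministic structural claim: converting the qualitative picture ``finite backward subtrees by (i), escape routes at incomplete vertices by (ii)'' into a rigorous induction guaranteeing irreversible forward progress along $P(a,\rho)$. Without (i) the construction in Figure~\ref{figure: odometer bound strict inequality} already shows that the walker on the finite exhaustion can visit $B_s$ arbitrarily often, so that~\ref{item: S3} fails; hypothesis (ii) is what prevents the walker from being trapped in a region where $P(a,\rho)$ advances but the walker cycles back indefinitely.
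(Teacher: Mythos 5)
Your high-level strategy — verifying condition \ref{item: S3} of Theorem~\ref{theorem: three TFAE} — matches the paper's, but the core of the argument is missing and the proposed substitute is built on the wrong object.

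The paper's proof hinges on analyzing the \emph{final} rotor configuration $\xi_R := \sigma_{G_R,Z_R}(\rho_R)$, not the initial one. By Proposition~\ref{proposition: stationarity of wsf for finite graphs}, $\xi_R \overset{d}{=} \ousf(G_R,Z_R)$, so its cylinder probabilities converge to those of $\owusf(G)$, and conditions \eqref{item: tree stationarity 1}--\eqref{item: tree stationarity 2} (which you are given for $\rho$) transfer to $\xi_R$ in the limit. The structural input is Lemma~\ref{lemma: range of a rotor walk}: for any incomplete vertex $x$ on $P(a,\xi)$, the walk's range up to $\LV(x)$ lies in $W(\xi,x)$. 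This works because $\xi(x)$ records the \emph{last} departure direction at each visited $x$, and Lemma~\ref{lemma: first visit is equal to last exit} (again a statement about the final configuration) shows that past an incomplete $x_i$ on $P(a,\xi)$ the walker advances to $x_{i+1}$ exactly once and never returns. Your induction is instead phrased along $P(a,\rho)$, the forward path of the \emph{initial} configuration; but the walker does not traverse $P(a,\rho)$ (the first move from $a$ goes to $\tau_a(\rho(a))$, not $\rho(a)$, and the rotors are overwritten as the walk proceeds), so ``irreversible forward progress along $P(a,\rho)$'' is not a well-posed inductive invariant. You flag this yourself as ``the main obstacle,'' and indeed it is exactly the missing content.

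The transfer step also has a gap. Coupling the $G$-walk and the $G_R$-walk to agree until the first exit from $B_r$ does not control what the $G_R$-walk does \emph{after} exiting $B_r$ and \emph{before} hitting $Z_R$: it may re-enter $B_s$, and whether it does depends on rotors outside $B_r$, so the event $\{X_t^{(R)} : t \leq t_R(s)\} \subseteq B_r$ is not a cylinder event in the initial rotors and cannot be transferred by \eqref{equation: limit definition wusf} directly. The paper sidesteps this by replacing the trajectory event with the event $E_r(\xi_R)$, which is measurable with respect to $\xi_R$ restricted to $B_{r+1}$, and then invoking Lemma~\ref{lemma: range of a rotor walk} to deduce the trajectory bound \emph{deterministically} on $E_r(\xi_R)$. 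Without the final-configuration viewpoint, the two lemmas built on it, and the observation that $\xi_R$ has a known law, the proposal does not close.
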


In order to show that 
 the
 $b$-ary tree $\Tb_{b}$ $(b \geq 2)$ satisfies the two conditions in Proposition~\ref{proposition: tree stationarity}, we need  the following two properties  of the oriented subgraph $\rho$ sampled from $\owusf(\Tb_b)$:
 \begin{enumerate}[label=(\alph*),ref=\alph*]
 \item \label{item: binary tree wsf properties 1} The underlying graph $H$ of any oriented trees of  $\rho$ has exactly one end (i.e. any two infinite unoriented paths in $H$ can differ by at most finitely many vertices) almost surely.  
 \item \label{item: binary tree wsf properties 2} Let $\langle x_0, x_1, x_2, \ldots\rangle $ be the path $P(a,\rho)$, and let $E_i$  $(i\geq 1)$ be the event that $x_i$ is incomplete in $\rho$.
 Then $(E_i)_{i\geq 1}$ are independent events,  and each event has probability  $\Pb(E_i)=1-\left(1/b\right)^{b-1}$ to occur.
 \end{enumerate}
 Indeed, these two properties can be deduced from Wilson's method oriented toward infinity, and
 we refer to~\cite[Section~10.6]{LP16} for proofs.
Now note that conditition~\eqref{item: tree stationarity 1} in Proposition~\ref{proposition: tree stationarity} follows from \eqref{item: binary tree wsf properties 1}, 
and 
conditition~\eqref{item: tree stationarity 2}
follows from \eqref{item: binary tree wsf properties 2}.



We now build toward the proof of Proposition~\ref{proposition: tree stationarity}.
Our proof relies on the following crucial yet simple observation:
If a vertex $x$ was  visited during the walk, then 
$x$ is contained in the same weak component of the final rotor configuration 
as the initial location $a$.


Consider a transient rotor walk $(X_t)_{t\geq 0}$ on $G$.
For any vertex  $x$ of $G$ that was visited by the rotor walk,
 we denote by $\FV(x):=\FV_{G,Z}(\rho,x)$ and $\LV(x):=\LV_{G,Z}(\rho,x)$
the first time and the  last time the vertex  $x$ being visited by the rotor walk, respectively, i.e.
\begin{equation*}\label{equation: first and last visit}
\begin{split}
\FV(x)&:=\min \{ t \geq 0 \mid X_t=x \};\\
\LV(x)&:=\max \{ t\geq 0 \mid X_t=x \}.
\end{split}
\end{equation*}  

\begin{lemma}\label{lemma: first visit is equal to last exit}
Let $G$ be a  tree that is locally finite.
Consider any rotor walk on $G$ with initial location $a$, initial rotor configuration $\rho$, and (not necessarily empty) sink $Z$.
Suppose that this rotor walk is transient, and let $\xi:=\sigma(\rho)$ be the final rotor configuration of this walk.
Then, for any vertex $x_i$ in $P(a,\xi):=\langle x_0,x_1,x_2,\ldots \rangle$ that is incomplete in $\xi$, we have
\[ \FV(x_{i+1})=\LV(x_{i})+1.\]
That is,  the first  visit of  $x_{i+1}$ was right after the last  visit of $x_i$.
\end{lemma}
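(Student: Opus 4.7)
The plan is to prove $\FV(x_{i+1}) = \LV(x_i)+1$ by establishing the two inequalities separately, with the incompleteness hypothesis used crucially in only one of them. The easy direction is immediate from the rotor-update rule \eqref{equation: rotor walk update}: because the walker never visits $x_i$ after time $\LV(x_i)$, the rotor at $x_i$ is unchanged past that moment, so at time $\LV(x_i)+1$ the walker has just moved from $x_i$ to $\xi(x_i) = x_{i+1}$, giving $\FV(x_{i+1}) \le \LV(x_i)+1$.

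For the reverse inequality I would first reduce to a counting problem. Since $G$ is a tree, removing the edge $\{x_i,x_{i+1}\}$ splits $G$ into two components with $a$ on the $x_i$-side, so every visit to $x_{i+1}$ is preceded by a direct crossing $x_i\to x_{i+1}$, and $\FV(x_{i+1})$ is determined by the first such crossing. A pigeonhole argument on the cyclic rotor at $x_i$ (combined with $\xi(x_i) = x_{i+1}$) shows that the number of $x_i\to x_{i+1}$ crossings equals $\lceil c_i/d_i \rceil$, where $c_i := u(\rho,x_i)$ and $d_i := \deg(x_i)$. It therefore suffices to prove $c_i \le d_i$, since then the unique crossing must be the one at time $\LV(x_i)$.

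To prove $c_i \le d_i$ I would use the tree structure and incompleteness to derive a contradiction. Assume $c_i > d_i$; then the walker exits $x_i$ toward every neighbor at least once, so in particular it visits the neighbor $y$ with $y \notin T(x_i,\xi)$ guaranteed by incompleteness. Because $X_{\LV(x_i)+1} = x_{i+1}$ lies in the component $H_i$ of $G - \{x_i\}$ containing $x_{i+1}$ and $x_i$ is never revisited, the walker is confined to $H_i$ for all times $> \LV(x_i)$. Since $y$ lies in a different component of $G - \{x_i\}$, this forces $\LV(y) < \LV(x_i)$. After time $\LV(y)$ the walker moves to $\xi(y)$: if $\xi(y) \neq x_i$ then $\xi(y)$ lies in the component $C_y$ of $G - \{x_i\}$ containing $y$, and the walker is trapped in $C_y$ (leaving would require revisiting $y$, since $y$ is the only neighbor of $x_i$ inside $C_y$), contradicting the later visit to $x_i$ at time $\LV(x_i)$. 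Hence $\xi(y) = x_i$, which puts $y \in T(x_i,\xi)$ and contradicts the choice of $y$. The main obstacle is packaging this trapping argument cleanly — simultaneously invoking the eventual confinement to $H_i$ and the inability to escape $C_y$ without revisiting $y$ — both of which genuinely need $G$ to be a tree; once it is in place, the remaining steps are routine bookkeeping from \eqref{equation: rotor walk update}.
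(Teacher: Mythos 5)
Your proof is correct, and its core logic matches the paper's: assume the conclusion fails, observe via the rotor-cycling mechanism that every neighbor of $x_i$ must then be visited, and contradict the incompleteness of $x_i$ in $\xi$. There are two differences in the packaging. First, you reduce to the quantitative bound $u(\rho,x_i)\le\deg(x_i)$ via the crossing-count $\lceil c_i/d_i\rceil$, whereas the paper argues directly: if $\FV(x_{i+1})<\LV(x_i)+1$, then the walker is at $x_i$ both at time $\FV(x_{i+1})-1$ and at time $\LV(x_i)$, exiting toward $x_{i+1}$ on both occasions, so the rotor at $x_i$ completes a full cycle in between; these are equivalent formulations of the same pigeonhole. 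Second, and more substantively, for the final step (``every neighbor visited $\Rightarrow$ all neighbors lie in $T(x_i,\xi)$'') the paper invokes its stated-but-unproved observation that every visited vertex lies in the same $\xi$-component as $a$, whereas you re-derive exactly the needed instance via two clean trapping arguments: first that $\LV(y)<\LV(x_i)$ because the walker is confined to $H_i$ after $\LV(x_i)$, and then that $\xi(y)=x_i$ because otherwise the walker would be trapped in $C_y$ before reaching $x_i$ at time $\LV(x_i)$. Your version is more self-contained (it does not take the paper's unproved observation as a black box) at the cost of a slightly longer final step; both arguments rely equally on the cut-vertex structure of trees, and both are correct.
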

\begin{proof}
Since $\xi(x_i)=x_{i+1}$, it follows that the walker moved toward $x_i$ right after the last visit to $x_i$ (i.e. $X_t=x_{i+1}$ with $t=\LV(x_i)+1$).
It then suffices to show that $t_1:=\LV(x_i)+1$ is the first visit to  $x_{i+1}$.

Suppose to the contrary that $t_2:=\FV(x_{i+1})$ is strictly smaller than $t_1$.
Now note that $X_{t_2-1}=x_i$ since the unique path from $a$ to $x_{i+1}$ in $G$ goes through $x_i$ (as $G$ is a tree).
Since $X_{t_2-1}=X_{t_1-1}=x_i$ and $t_2<t_1$, it follows from the mechanism of the rotor walk that every neighbor of $x_i$ in $G$ was visited by the walker  in between the $t_2-1$-th and $t_1-1$-th step of the walk.
This implies that every neighbor of $x_i$ is contained in the same component as $x_i$ in the final rotor configuration $\xi$,
and hence $x_i$ is a complete vertex in $\xi$.
This contradicts our assumption that $x_i$ is incomplete in $\xi$,
as desired. 
\end{proof}

For any vertex $x$ of $G$,
we denote by $W(\rho,x)$
the set of vertices of $G$  with a directed path in $\rho$ from the vertex to $x$,
i.e.
\[ W(\rho,x):=\{ y  \mid \exists\,  \langle y=x_0,\ldots, x_n=x\rangle  \text{ s.t. }  \rho(x_i)=x_{i+1} \ \forall \ i<n \}. \]


\begin{lemma}\label{lemma: range of a rotor walk}
Let $G$ be a  tree that is locally finite.
Consider any rotor walk $(X_t,\rho_t)_{t\geq 0}$ on $G$ with initial location $a$, initial rotor configuration $\rho$, and  (not necessarily empty) sink Z.
Suppose that the rotor walk is transient, and let $\xi:=\sigma(\rho)$ be the final rotor configuration of this walk.
Then, for any vertex $x$ in $P(a,\xi)$ that is incomplete in $\xi$, we have
\[ \{X_t \mid t \leq \LV(x)  \} \subseteq W(\xi,x).\]
\end{lemma}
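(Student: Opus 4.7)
Write $x = x_i$, so that $\xi(x_i) = x_{i+1}$, and let $T_{i+1}$ denote the component of $G\setminus\{x_i\}$ containing $x_{i+1}$. The plan has three ingredients: Lemma~\ref{lemma: first visit is equal to last exit}, which gives $\FV(x_{i+1}) = \LV(x_i) + 1$; the tree structure of $G$, which forces the walker to cross $x_i$ to enter or leave $T_{i+1}$; and the basic rotor identity $\xi(u) = X_{\LV(u)+1}$, valid for every visited non-sink $u$, which in particular yields $\LV(\xi(u)) > \LV(u)$. From the first two ingredients I deduce two confinement facts: (a) no vertex of $T_{i+1}$ is visited by time $\LV(x_i)$, since the unique path in $G$ from $a$ to such a vertex goes through $x_{i+1}$, whose first visit is at step $\LV(x_i)+1$; and (b) after step $\LV(x_i)$ the walker is trapped in $T_{i+1}$, being at $x_{i+1}$ at step $\LV(x_i)+1$ and never returning to $x_i$. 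As a minor verification, no vertex of $Z$ is visited by time $\LV(x_i)$, because the walker sits at the non-sink $x_i$ at that time, so the rotor identity applies throughout $\{X_t : t \leq \LV(x_i)\}$.

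Now fix $v \in \{X_t : t \leq \LV(x_i)\}$. The case $v = x_i$ is immediate. Otherwise, confinement (a) places $v$ in some component $T_w$ of $G\setminus\{x_i\}$ with $w \neq x_{i+1}$. Iterating the rotor identity, the $\xi$-forward orbit $v, \xi(v), \xi^2(v), \ldots$ consists of visited vertices with strictly increasing $\LV$-values, hence is a simple path in the tree $G$ starting at $v \in T_w$. In a tree, such a path either exits $T_w$ through the unique bridge $(w, x_i)$---thereby visiting $x_i$ and yielding $v \in W(\xi, x_i)$---or stays inside $T_w$ forever. The latter is excluded by confinement (b): every orbit vertex would then lie in $T_w \neq T_{i+1}$, so would not be visited at any time after $\LV(x_i)$, forcing the strictly increasing integer sequence $\LV(v_0) < \LV(v_1) < \cdots$ to fit inside $\{0, 1, \ldots, \LV(x_i)\}$. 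A potential finite orbit terminating at a sink inside $T_w$ is ruled out the same way, since a visited sink in $T_w$ would be visited at the termination time, which must strictly exceed $\LV(x_i)$.

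The main obstacle is coordinating the two confinement facts with the $\LV$-monotonicity to extract the bounded-integer contradiction; Lemma~\ref{lemma: first visit is equal to last exit} is the critical input, because pinning $\FV(x_{i+1})$ exactly at $\LV(x_i)+1$ is what simultaneously locks down both (a) the early unreachability of $T_{i+1}$ and (b) the later entrapment inside it. Everything else is routine tree geometry once those confinements are in place.
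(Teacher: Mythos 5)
Your proof is correct. You share the paper's two main ingredients — Lemma~\ref{lemma: first visit is equal to last exit} to pin down $\FV(x_{i+1})=\LV(x_i)+1$, and the tree structure to confine $\{X_t : t\le\LV(x_i)\}$ to the complement of $T_{i+1}$ — but the way you close the argument is genuinely different. The paper invokes its stated ``crucial observation'' (every visited vertex lies in the weak component of $a$ in $\xi$) and then asserts that intersecting the confinement region with $T(a,\xi)$ gives exactly $W(\xi,x_i)$; that intersection identity is left unjustified and in fact requires a structural fact about oriented spanning forests on trees (that every vertex in a weak component has the same forward destination as $a$, so its forward $\xi$-path must merge with $P(a,\xi)$, and being on the $x_i$-side it must merge at some $x_k$ with $k\le i$). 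You bypass this entirely by tracing the forward $\xi$-orbit of $v$ directly and using the dynamical fact $\LV(\xi^{j+1}(v))>\LV(\xi^j(v))$ together with confinement (b): an orbit trapped in $T_w$ would produce an unbounded strictly increasing sequence of $\LV$-values inside $\{0,\ldots,\LV(x_i)\}$, and a sink in $T_w$ would be visited at a time $>\LV(x_i)$, both impossible, so the orbit must exit through $x_i$. This is more self-contained than the paper's version (no appeal to weak-component structure beyond what the rotor identity gives for free) and handles the sink-terminated case explicitly, at the cost of being somewhat longer; the paper's version is shorter once you supply the missing merging lemma. One small redundancy in your write-up: confinement (b) already implies that no sink is visited by time $\LV(x_i)$, so the separate ``minor verification'' at the end of your first paragraph, while correct, is not an independent ingredient.
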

\begin{proof}
Let $P(a,\xi):=\langle x_0,x_1,x_2,\ldots \rangle$, and let $x=x_i$ be an incomplete vertex in $\xi$.
By Lemma~\ref{lemma: first visit is equal to last exit},
the walker had not visited $x_{i+1}$ yet during the first $\LV(x_i)$-th step of the walk.
Since $G$ is a tree, this means that, during  the first $\LV(x_i)$-th step of the walk,  the walker has only visited vertices in the weak component of $G\setminus \{x_i,x_{i+1}\}$ that contains $x_i$.
On the other hand, all vertices visited by the walker are in the weak component of $a$ in $\xi$.
Now note that 
the intersection of these two components is equal to $W(\xi,x_i)$,
and the lemma now follows.
\end{proof}

%

We now present the proof of Proposition~\ref{proposition: tree stationarity}.
\begin{proof}[Proof of Proposition~\ref{proposition: tree stationarity}]
It suffices to check that condition~\ref{item: S3} in Theorem~\ref{theorem: three TFAE} is satisfied.
That is, for any $\epsilon >0$ and any $s>0$,
we have for sufficiently large $r$ that
\[ \lim_{R \to \infty} \Pb [\{ X_t^{(R)} \mid t \leq   t_R(s) \} \subseteq   B_r]\geq  1-\epsilon,   \]
where $(X_t^{(R)},\rho_t^{(R)})$ is the rotor walk  on $G_R$ with initial location $a$, with initial rotor configuration $\rho_R$ sampled from $\ousf(G_R,Z_R)$, and with sink $Z_R$.
The integer $t_R(s)$ is the last visit of $B_s$.

Let $\xi_R:=\sigma_{G_R,Z_R}(\rho_R)$ be the final rotor configuration of the rotor walk $(X_t^{(R)},\rho_t^{(R)})$.
Note that $\xi_R\overset{d}{=} \ousf(G_R,Z_R)$ by the rotor walk stationarity of $\ousf(G_R,Z_R)$ for finite graphs (Proposition~\ref{proposition: stationarity of wsf for finite graphs}).

Fix $r \geq 0$.
For any rotor configuration $\rho$,
let $E_r(\rho)$ be the event that that there exists a vertex $x$ such that
\begin{enumerate}[label=(\alph*)]
\item  $x$ is an incomplete vertex in $\rho$ that is contained in $P(a,\rho) \cap (B_r \setminus B_s)$;  and
\item  $W(\rho,x)$ is contained in $B_r$.
\end{enumerate}
Note that
the event $E_r(\rho)$ depends only on edges in $B_{r+1}$,
and hence we have by  \eqref{equation: limit definition wusf} that 
\begin{equation}\label{equation: d-ary tree 1}
 \lim_{R \to \infty} \Pb_{\xi_R}[E_r(\xi_R)]=\Pb_{\xi}[E_r(\xi)],  
\end{equation}
where $\xi\overset{d}{=} \owusf(G)$.

Since $\xi$ satisfies condition~\eqref{item: tree stationarity 1} in the proposition,
we have that there exists an incomplete vertex $x$ in $\rho$
that is  contained in $P(a,\rho) \cap (B_{r} \setminus B_s)$,
where $r$ is any integer greater than a constant $r_1(\xi)>0$ that depends on $\xi$.
Since $\xi$ satisfies condition~\eqref{item: tree stationarity 2}
 in the proposition,
 we also have that $W(\rho,x)$ is contained in $B_r$,
 where $r$ is any integer greater than a constant $r_2(\xi)>0$ that depends on $\xi$.
 Since $r_1(\xi)$ and $r_2(\xi)$ are almost surely finite, 
 we have for sufficiently large $r$ that 
\begin{equation}\label{equation: d-ary tree 2}
\Pb_{\xi}[E_r(\xi)]\geq \Pb_\xi[r_1(\xi),r_2(\xi)<r] \geq 1-\epsilon. 
\end{equation}
Combining  \eqref{equation: d-ary tree 1} and \eqref{equation: d-ary tree 2}, we then get
\begin{equation}\label{equation: d-ary tree 3}
 \lim_{R \to \infty} \Pb_{\xi_R}[E_r(\xi_R)]\geq 1-\epsilon.  
\end{equation}

Now note that, if $E_r(\xi_R)$ occurs, then we have
by Lemma~\ref{lemma: range of a rotor walk} that
the range of the rotor walk on $G_R$ is contained $W(\xi_R,x)$, which in turn is contained in the ball $B_r$, 
i.e.
\[ \{X_t^{(R)} \mid t \leq \LV(x)  \}\subseteq W(\xi_R,x)\subseteq B_r. \]
It then follows from  \eqref{equation: d-ary tree 3} that
\[  \lim_{R \to \infty} \Pb[\{X_t^{(R)} \mid t \leq \LV(x)  \}\subseteq B_r]\geq   \lim_{R \to \infty}   \Pb_{\xi_R}[E_r(\xi_R)]\geq 1-\epsilon,     \]
and the proof is complete.
\end{proof}

\section{Almost sure convergence of  occupation rates}\label{section: occupation rate}


In this section we show that  occupation rates of rotor walks  converge to the Green function under assumptions of  Theorem~\ref{theorem: Schramm bound for vertex-transitive graphs} or Theorem~\ref{theorem: Schramm's bound for rotor walk stationary graphs}.

We will first present the proof of Theorem~\ref{theorem: Schramm's bound for rotor walk stationary graphs} (as it has a simpler proof).
We restate the theorem here for the convenience of the reader.
Recall the definition of occupation rate ${S_n}/{n}$ (Definition~\ref{definition: occupation rate}) and Green function $\Gc$ (Definition~\ref{definition: Green function}).

\begin{reptheorem}{theorem: Schramm's bound for rotor walk stationary graphs}
Let $G$ be a connected simple graph that is locally finite and transient. 
Consider any rotor walk on $G$ with initial location $a$ and with empty sink.
Suppose that  $\owusf(G)$ is rotor walk stationary.
Then, for  almost every $\rho$ picked from $\owusf(G)$,
\[  \lim_{n\to \infty} \frac{S_n(\rho,x)}{n}=\Gc(x) \qquad \forall \ x \in V(G). \]
\end{reptheorem}

\begin{proof}
First note that  $\sigma$ is a function on rotor configurations that is measure preserving with respective to
 $\owusf(G)$ (by the assumption that $\owusf(G)$ is rotor walk stationary).
 Also note that  $u(\cdot,x)$ is integrable with respect to the measure $\owusf(G)$ (by Theorem~\ref{theorem: odometer is bounded above by Green function}).
 It then follows from Birkhoff-Khinchin theorem (otherwise known as the pointwise ergodic theorem) that
 the limit
 \[  X(\rho):= \lim_{n \to \infty} \frac{S_n(\rho,x)}{n}= 
 \lim_{n \to \infty} \frac{1}{n} \sum_{i=0}^{n-1} u(\sigma^i(\rho),x),\]
 exists for almost every $\rho$ sampled from $\owusf(G)$,
 and furthermore $\Eb_{\rho}[X(\rho)]=\Eb_{\rho}[u(\rho,x)]$.
It then suffices to show that $X(\rho)=\Gc(x)$ almost surely.

Since $\owusf(G)$ is rotor walk stationary, we have by Theorem~\ref{theorem: two TFAE}  that
\begin{equation}\label{equation: Schramm bound stationary 1}
\Eb_{\rho}[X(\rho)]=\Eb_{\rho}[u(\rho,x)]=\Gc(x).
\end{equation}
 On the other hand, we have by Lemma~\ref{lemma: Schramm bound} that, for any $\rho$,
 \begin{equation}\label{equation: Schramm bound stationary 2}
X(\rho)=\lim_{n \to \infty} \frac{S_n(\rho,x)}{n}= \liminf_{n \to \infty} \frac{S_n(\rho,x)}{n}\geq \Gc(x).
\end{equation}
It then follows from  \eqref{equation: Schramm bound stationary 1} and \eqref{equation: Schramm bound stationary 2} that $X(\rho)=\Gc(x)$ almost surely, as desired.
%
%
\end{proof}

We now present the proof of Theorem~\ref{theorem: Schramm bound for vertex-transitive graphs}, and we restate the theorem here for the convenience of the reader.

\begin{reptheorem}{theorem: Schramm bound for vertex-transitive graphs}
Let $G$ be a simple connected graph that is  locally finite, transient, and vertex-transitive.
Consider any rotor walk on $G$ with initial location $a$ and with empty sink.
Then, for almost every $\rho$ sampled from $\owusf(G)$,
\[\lim_{n \to \infty} \frac{S_n(\rho,x)}{n}=\Gc(x) \qquad \forall \ x \in V(G).   \]
\end{reptheorem}

%

We now build toward the proof of Theorem~\ref{theorem: Schramm bound for vertex-transitive graphs}.
We will use the following lower bound for $S_n/n$ that holds for all vertex-transitive graphs.
We would like to warn the reader  that this bound is far from sharp, but is sufficient for our purpose.

\begin{lemma}\label{lemma: occupation rate estimate with neutral green function}
Let $G$ be a simple connected graph that is locally finite, transient, and vertex-transitive.
Consider any rotor walk on $G$ with initial location $a$ and with empty sink.
 Then, for any initial rotor configuration $\rho$ and  any  $n\geq 1$, 
\begin{equation*}
\begin{split}
\frac{S_n(\rho,x)}{n}
\geq & \Gc(x)- C(\log n)^{-2} \qquad \forall \ x \in V(G),
\end{split}
\end{equation*}
where $C>0$ is a constant depending only on $G$.
\end{lemma}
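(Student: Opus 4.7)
The plan is to quantify the Schramm lower bound of Lemma~\ref{lemma: Schramm bound} by running its truncation argument with explicit error estimates, and then optimizing the truncation radius. Following that argument, I couple the rotor walk on $G$ (empty sink) with the rotor walk on the finite subgraph $G_r$ (sink $Z_r$): letting $\xi$ denote the restriction of $\rho$ to $B_r$, monotonicity in the sink yields $S_n(\rho, x) \geq S_{G_r, Z_r, n}(\xi, x)$. The strategy is then to combine (i) a discrepancy estimate for the rotor walk on the finite graph $G_r$ with (ii) a quantitative comparison of the truncated Green function $\Gc_{G_r, Z_r}$ and the full Green function $\Gc$, and to choose the radius $r = r(n)$ so as to balance the two errors.

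For (i), a rotor-walk stability estimate (cf.~\cite{HLM08}) yields a constant $K(r)$, depending only on $G_r$, such that
\[
\bigl| S_{G_r, Z_r, n}(\xi, x) - n \, \Gc_{G_r, Z_r}(x) \bigr| \leq K(r)
\]
uniformly in $n$ and $\xi$; this follows from the Lipschitz dependence of the rotor odometer on the initial rotor configuration, and $K(r)$ can be taken polynomial in $|B_r|$. For (ii), write $\Gc_y(x)$ for the expected number of visits to $x$ by the simple random walk started at $y$, so that $\Gc_a(x) = \Gc(x)$. The strong Markov property applied at the first hitting time $\tau_{Z_r}$ of $Z_r$ by the simple random walk gives
\[
\Gc(x) - \Gc_{G_r, Z_r}(x) = \Eb_a\bigl[\Gc_{X_{\tau_{Z_r}}}(x)\bigr] \leq \max_{y \in Z_r} \Gc_y(x).
\]
Combining the two bounds yields
\[
\frac{S_n(\rho, x)}{n} \geq \Gc(x) - \frac{K(r)}{n} - \max_{y \in Z_r} \Gc_y(x).
\]

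It remains to choose $r = r(n)$. For any transient vertex-transitive graph $G$, the heat-kernel upper bounds of Saloff-Coste together with the volume lower bounds of Trofimov~\cite{SC95,Tro03} yield a quantitative polynomial decay of $\Gc_y(x)$ in $d(y, x)$; in the borderline case of polynomial volume growth of degree $3$ one has $\Gc_y(x) = O(1/d(y, x))$, and the decay is strictly faster in every other transient vertex-transitive case. Choosing $r = r(n)$ to be a polylogarithmic function of $n$ (with exponent depending on the volume growth of $G$) then makes $K(r)/n$ negligible and the Green-function tail bounded by $C(\log n)^{-2}$, which is the desired estimate. The main obstacle is this last step: both the polynomial dependence of $K(r)$ on $|B_r|$ and the extraction of the Green-function decay from the Saloff-Coste/Trofimov estimates require some care, and the borderline polynomial-growth case of degree $3$ is what ultimately forces the $(\log n)^{-2}$ rate appearing in the statement.
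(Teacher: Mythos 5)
Your overall strategy matches the paper's: truncate to $G_r$, combine a discrepancy estimate for the rotor walk on the finite graph with a Green-function tail bound, and optimize $r$. The discrepancy estimate you invoke is essentially the inequality from \cite[Lemma~8]{FGLP14} that the paper uses, giving an error $\lesssim \sum_{x,y\in B_r,\, y\sim x}|\Gc(x)-\Gc(y)|\lesssim V(r)$; and your Green-function tail $\max_{y\in Z_r}\Gc_y(x)$ plays the same role as the paper's heat-kernel tail $\sum_{t\geq r}p_t(a,x)$. Both are controlled by the heat-kernel bound $p_t\lesssim t^{-D/2}$ whenever $V(r)\gtrsim r^D$.

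However, the final optimization step is wrong in a way that matters. You claim the borderline is polynomial growth of degree $D=3$ and that this forces the $(\log n)^{-2}$ rate, and you propose a polylogarithmic choice of $r(n)$ uniformly. Both points are off. When $G$ has polynomial growth $V(r)\asymp r^D$ with $D\geq 3$, one may take $r$ to be a small \emph{power} of $n$ (the paper takes $r=\lfloor n^{1/(2D)}\rfloor$) and obtains a genuinely polynomial error rate $\lesssim n^{-1/(4D)}$, far better than $(\log n)^{-2}$. The regime that forces the logarithmic rate is the \emph{superpolynomial} growth case allowed by the Trofimov/Gromov dichotomy: there $V(r)$ can be as large as $e^{cr}$, so the term $K(r)/n\gtrsim e^{cr}/n$ forces $r=O(\log n)$ with exponent exactly one (a polylogarithm $(\log n)^{\alpha}$ with $\alpha>1$ already makes $e^{cr}/n\to\infty$), and then the tail $\lesssim r^{-2}=(\log n)^{-2}$ gives the claimed bound. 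Your argument as written glosses over this case, which is precisely the one that determines the rate in the lemma; you need the explicit dichotomy (polynomial vs.\ superpolynomial volume growth) and the corresponding two choices of $r(n)$, rather than a single polylogarithmic choice attributed to the $D=3$ polynomial case.
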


One of the ingredients of the proof of Lemma~\ref{lemma: occupation rate estimate with neutral green function} is the following version of Gromov's theorem~\cite{Gro81} for vertex-transitive graphs by Trofimov~\cite{Tro03}.
Let $V(r):=|B_r|$ be the number of vertices in a ball of radius $r$ in $G$.
Then, for any vertex-transitive graphs,
either $V(r)\asymp  r^D$ for some integer $D$  or  $\lim_{r \to \infty} \frac{V(r)}{r^D}=\infty$  for all integer $D$.
In the former case, we say that $G$ has \emph{polynomial growth} of degree $D$.
In the latter case, we say that $G$ has \emph{superpolynomial growth}.
Here, 
we write $a(r) \lesssim b(r)$ if there exists $c>0$ such that $a(r)\leq c b(r)$ for all $r$, and
we write $a(r) \asymp b(r)$ if $a(r) \lesssim b(r)$ and $b(r) \lesssim a(r)$.

Another ingredient is  
 the  following estimate of the visit probability of the simple random walk, which holds 
 for any vertex-transitive graph with $V(r) \gtrsim r^D$,
\begin{equation}\label{equation: return probability polynomial growth}
p_{t}(a,x)\lesssim t^{-\frac{D}{2}}.
\end{equation}
Here $p_{t}(a,x)$ denotes the probability to visit $x$ at the $t$-th step of the simple random walk on $G$ that starts at $a$.
We refer to  \cite[Corollary~6.32]{LP16} or \cite[Lemma~3.5, Theorem~6.1]{LO17} for a proof.




The final ingredient is  the following estimate of the occupation rate of the rotor walk on vertex-transitive graphs that follows from the proof in \cite[Lemma~8]{FGLP14}:
\begin{equation}\label{equation: occupation rate estimate}
\left| {S_{Z_r,n}(\rho,x)} -n\Gc_{Z_r}(x)\right|\leq \sum_{\substack{x,y \in B_r\\ y \sim x}} |\Gc_{\varnothing}(x)-\Gc_{\varnothing}(y)|.
\end{equation}

%
%

\begin{proof}[Proof of Lemma~\ref{lemma: occupation rate estimate with neutral green function}]

First note that $S_{n}=S_{\varnothing,n}\geq S_{Z_r,n}$ for any $r\geq 0$
as the total number of visits can only decrease if the sink of the rotor walk is enlarged.
This implies that
\begin{equation*}
\begin{split}
& \frac{S_n(\rho,x)}{n}- \Gc_{\varnothing}(x) \geq  \frac{S_{Z_r,n}(\rho,x)}{n}- \Gc_{\varnothing}(x)= K_1+ K_2,
\end{split}
\end{equation*}
where $K_1:= \frac{S_{Z_r,n}(\rho,x)}{n}- \Gc_{Z_r}(x)$ and $K_2:=\Gc_{Z_r}(x)-\Gc_{\varnothing}(x)$.
It then suffices to show that $|K_1| +|K_2| \lesssim (\log n)^{-2}$ for some $r$.

Now note that, for any $r\geq 0$,
\begin{equation*}
\begin{split}
|K_1|\leq & \sum_{\substack{x,y \in B_r\\ y \sim x}}\frac{|\Gc_{\varnothing}(x)-\Gc_{\varnothing}(y)|}{n}  \qquad \text{(by  \eqref{equation: occupation rate estimate})}\\
\leq &  \sum_{\substack{x,y \in B_r\\ y \sim x}}
\frac{\Gc_{\varnothing}(x)+\Gc_{\varnothing}(y)}{n} \\
\lesssim & \, \frac{\Gc_{\varnothing}(B_r)}{n}
\lesssim  \, \frac{V(r)}{n}.
\end{split}
\end{equation*}

Also note that, for any $r \geq 0$,
\begin{align*}
|K_2|=& \Gc_{\varnothing}(x)- \Gc_{Z_r}(x)\leq  \sum_{t\geq r} p_{t}(a,x),
\end{align*}
as the walker has not reached $Z_r=\partial B_r$ yet during the 
first $r$ steps of the simple random walk.

We now consider the case when $G$ has polynomial growth of degree $D$.
Note that $D\geq 3$ since $G$ is transient~(see for example \cite[Theorem~4.6]{SC95} for a proof).
We then have, for any $r\geq 0$,
\begin{align*}
|K_1|+|K_2|\lesssim& \, \frac{V(r)}{n}+ \sum_{t\geq r} p_{t}(a,x)\\
\lesssim & \, \frac{r^D}{n}+ \sum_{t\geq r} t^{-\frac{D}{2}} \qquad \text{(by  \eqref{equation: return probability polynomial growth})}\\
\lesssim& \, \frac{r^D}{n}+ r^{-\frac{1}{2}} \qquad (\text{since } D\geq 3).
\end{align*}
By taking $r=\lfloor n^{\frac{1}{2D}} \rfloor$,  we then 
get $|K_1|+|K_2| \lesssim n^{-\frac{1}{2}}+n^{-\frac{1}{4D}}\lesssim (\log n)^{-2}$, as desired.

We now consider the case when $G$ has superpolynomial growth.
Note that $V(r)\leq e^{cr}$ for some $c >0$ (since $G$ is vertex-transitive)
and $p_{t}(a,a)\lesssim t^{-3}$ by  \eqref{equation: return probability polynomial growth}.
We then have, for any $r\geq 0$,
\begin{align*}
|K_1|+|K_2|\lesssim& \, \frac{V(r)}{n}+ \sum_{t\geq r} p_{t}(a,a)
\lesssim  \, \frac{e^{cr}}{n}+ \sum_{t\geq r} t^{-3} \\
\lesssim& \, \frac{e^{cr}}{n}+ r^{-2}.
\end{align*}
By taking $r=\lfloor \frac{\log n}{2c} \rfloor$,
we then 
get $|K_1|+|K_2| \lesssim n^{-\frac{1}{2}}+ (\log n)^{-2}\lesssim (\log n)^{-2}$, as desired.
\end{proof}
We remark that, in the case of transient Cayley graphs,
one can instead use the inequality  $\Gc_{\varnothing}(B_r) \lesssim r^{5/2}$ from  \cite[Theorem~1.2]{LPS17} to estimate $|K_1|$ and get  a sharper lower bound with polynomial decay in Lemma~\ref{lemma: occupation rate estimate with neutral green function}.
 
We now show that $S_n/n$ converges for any subsequence  that grows exponentially. 
 
\begin{lemma}\label{lemma: escape rate for integer lattice for a specific subsequence}
Let $G$ be a simple transient Cayley graph.
Consider any rotor walk on $G$ with initial location $a$ and with empty sink.
Let $c>1$, and let $n_k:=\lfloor c^k \rfloor$.
Then, for almost every $\rho$ sampled from $\owusf(G)$,
\[\lim_{k \to \infty} \frac{S_{n_k}(\rho,x)}{n_k}=\Gc(x) \qquad \forall \ x \in V(G).   \]
\end{lemma}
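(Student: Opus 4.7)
The plan is to combine the in-expectation upper bound from Proposition~\ref{proposition: n-th odometer is bounded above by Green function} with the \emph{deterministic} lower bound from Lemma~\ref{lemma: occupation rate estimate with neutral green function}, and then apply Borel--Cantelli along the exponential subsequence $n_k = \lfloor c^k \rfloor$. Fix $x \in V(G)$ and $\epsilon > 0$. Since $G$ is a Cayley graph, it is vertex-transitive, so Lemma~\ref{lemma: occupation rate estimate with neutral green function} applies and gives a constant $C > 0$ such that $S_n(\rho,x)/n \geq \Gc(x) - C(\log n)^{-2}$ for \emph{every} rotor configuration $\rho$ and every $n \geq 2$. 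In particular, once $n$ is large enough that $C(\log n)^{-2} < \epsilon$, the lower-tail event $\{S_n(\rho,x)/n < \Gc(x) - \epsilon\}$ is empty; no probabilistic argument is needed on that side.

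For the upper tail, set
\[
Y_n(\rho) := \frac{S_n(\rho,x)}{n} - \Gc(x) + C(\log n)^{-2},
\]
which is nonnegative pointwise by the previous paragraph. Proposition~\ref{proposition: n-th odometer is bounded above by Green function} yields $\Eb_{\rho}[S_n(\rho,x)/n] \leq \Gc(x)$, hence $\Eb_{\rho}[Y_n] \leq C(\log n)^{-2}$. Markov's inequality then gives, for $n$ large,
\[
\Pb_{\rho}\!\left[\frac{S_n(\rho,x)}{n} > \Gc(x) + \epsilon\right] \leq \Pb_{\rho}[Y_n > \epsilon] \leq \frac{C}{\epsilon (\log n)^{2}}.
\]

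Now specialize to the subsequence $n_k = \lfloor c^k \rfloor$. Since $\log n_k \sim k \log c$, we have $\sum_{k \geq 1} (\log n_k)^{-2} < \infty$. By the Borel--Cantelli lemma, almost surely only finitely many of the events $\{S_{n_k}/n_k > \Gc(x) + \epsilon\}$ occur; combined with the deterministic lower bound this yields $|S_{n_k}(\rho,x)/n_k - \Gc(x)| \leq \epsilon$ for all sufficiently large $k$, almost surely. Intersecting over a countable sequence $\epsilon_m \downarrow 0$ gives almost sure convergence $S_{n_k}(\rho,x)/n_k \to \Gc(x)$ for the fixed $x$; then intersecting over the countable set $V(G)$ yields the desired conclusion for all $x$ simultaneously.

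There is no serious obstacle here: the two one-sided bounds are already in hand, and the use of Borel--Cantelli along a geometric subsequence is exactly what the weak estimate $(\log n_k)^{-2} \asymp k^{-2}$ was designed for. The one place that must be handled carefully is that the lower bound in Lemma~\ref{lemma: occupation rate estimate with neutral green function} is deterministic (valid for every $\rho$), so it applies to every realization drawn from $\owusf(G)$ and the lower tail contributes \emph{zero} probability to the Borel--Cantelli sum, rather than a term that would compete with $(\log n_k)^{-2}$.
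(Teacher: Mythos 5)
Your proof is correct and follows essentially the same route as the paper's: both define the shifted quantity $\varphi(n) = \Gc(x) - C(\log n)^{-2}$ (your $Y_n = S_n/n - \varphi(n)$), exploit its pointwise nonnegativity from Lemma~\ref{lemma: occupation rate estimate with neutral green function} to apply Markov's inequality one-sidedly using the expectation bound from Proposition~\ref{proposition: n-th odometer is bounded above by Green function}, and conclude via Borel--Cantelli along the geometric subsequence. The only cosmetic difference is that you spell out the intersection over a countable $\epsilon_m \downarrow 0$ and over $V(G)$, which the paper leaves implicit.
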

\begin{proof}
Write $\varphi(n):=  \Gc_{\varnothing}(a,x)- C(\log n)^{-\frac{1}{2}}$, where $C>0$ is as in Lemma~\ref{lemma: occupation rate estimate with neutral green function}.
Note that $\frac{S_n(a,\rho,x)}{n}-\varphi(n)$ is positive for all  $n$ by Lemma~\ref{lemma: occupation rate estimate with neutral green function}.

Let $\epsilon$ be an arbitrary positive real number.
Then, for $\rho \overset{d}{=} \owusf(G)$, 
\begin{align*}
q_n:=& \Pb_{\rho}\left[\left|\frac{S_n(\rho,x)}{n} -\varphi(n) \right|\geq \epsilon \right] \\
\leq &  \frac{1}{\epsilon} \Eb_{\rho} \left[\frac{S_n(\rho,x)}{n} -\varphi(n)  \right] \qquad \text{(by Markov's inequality)}\\
\leq &\frac{1}{\epsilon} \left(\Gc_{\varnothing}(x)-\varphi(n) \right) \qquad \text{(by Proposition~\ref{proposition: n-th odometer is bounded above by Green function})}\\
 =&  \frac{C}{\epsilon} \, (\log n)^{-2}.
\end{align*}

It then follows that 
\[\sum_{k=1}^\infty q_{n_k}\leq \frac{C}{\epsilon}  (k\log c)^{-2}< \infty.\]
By Borel-Cantelli lemma, we then conclude that,
 \[\limsup_{k \to \infty}  \left|\frac{S_{n_k}(\rho,x)}{n_k} -\varphi(n_k) \right| < \epsilon,  \]
 for  almost every $\rho$ sampled from $\owusf(G)$.
Since the choice of $\epsilon$ is arbitrary and $\varphi(n_k)$ converges to  $\Gc_{\varnothing}(x)$, the lemma now follows.
\end{proof}

We now extend the convergence in Lemma~\ref{lemma: escape rate for integer lattice for a specific subsequence} to the whole sequence.

\begin{proof}[Proof of Theorem~\ref{theorem: Schramm bound for vertex-transitive graphs}]
Let $\epsilon>0$ be an arbitrary positive real number, and let $n_k:=\lfloor (1+\epsilon)^k\rfloor$.
By Lemma~\ref{lemma: escape rate for integer lattice for a specific subsequence},
we have for almost every $\rho$ sampled from $\owusf(G)$ that
\[\lim_{k \to \infty} \frac{S_{n_k}(\rho,x)}{n_k}=\Gc(x). \]

Write $S_n:=S_n(\rho,x)$.
Since $S_n$ is an increasing function of $n$,
we have for any integer $n \in [n_{k}, n_{k+1}]$ that,
\[  \left( \frac{n_k}{n_{k+1}}\right) \frac{S_{n_k}}{n_k}  \ \leq \ \frac{S_n}{n} \ \leq \ \left( \frac{n_{k+1}}{n_{k}}\right) \frac{S_{n_{k+1}}}{n_{k+1}},\]
Since $\frac{n_{k+1}}{n_k} \to 1+\epsilon$ as $k \to \infty$,
we then get
\[  \frac{1}{(1+\epsilon)} \lim_{k \to \infty}\frac{S_{n_k}}{n_k}  \leq  \liminf_{n \to \infty}\frac{S_n}{n} \leq \limsup_{n \to \infty}\frac{S_n}{n} \leq (1+\epsilon)  \lim_{k \to \infty}\frac{S_{n_{k+1}}}{n_{k+1}}. \]
The conclusion of the theorem now follows by applying the inequality above with  $\epsilon$ given by a sequence $\epsilon_1,\epsilon_2,\ldots$ that converges to $0$.
%
%
\end{proof}


\section{Some open questions}\label{section: open problems}
We conclude with a few natural questions:
\begin{enumerate}[label=(\arabic*)]
\item Is  $\owusf(\Zb^d)$ rotor walk stationary with respect to any rotor walk on $\Zb^d$ for $d\geq 3$?

\item Is  the conclusion of Theorem~\ref{theorem: Schramm bound for vertex-transitive graphs} true for all transient graphs? 
That is to say,  does the event 
\[\left\{ \rho \ \bigg | \ \exists \ x \in V(G) \text{ s.t. }  
\limsup_{n \to \infty} \frac{S_n(\rho,x)}{n} > \Gc(x)\right\},\]
always occur with zero probability w.r.t $\owusf(G)$? 

\item Does there exist any rotor configuration $\rho$ for $\Zb^d$ for which its occupation rate converges to a value strictly between $0$ and $\Gc(x)$, i.e.,
\[ \lim_{n \to \infty} \frac{S_n(\rho,x)}{n}=c, \]
where $0<c<\Gc(x)$?
Note that  Landau and Levine~\cite{LL09} showed that such a rotor configuration always exist for any choice of $c$ if  the underlying graph $G$ is the binary tree $\Tb_2$ instead.
\end{enumerate}

\section*{Acknowledgement}
The author would like to thank Lionel Levine and Yuval Peres for their advising throughout the whole project.
In particular, the idea of  using Etemadi's proof of strong law of large numbers for Theorem~\ref{theorem: Schramm bound for vertex-transitive graphs} is due to the suggestion of Peres.
 The author would also like to thank Ander Holroyd  for inspiring discussions, Laurent Saloff-Coste for several references in Section~\ref{section: occupation rate}, and Dan Jerison, Wencin Poh, and Ecaterina Sava-Huss for helpful comments on an earlier draft.
Part of this work was done when the author was visiting the Theory Group at Microsoft Research, Redmond.

\bibliographystyle{alpha}
\bibliography{Schramm}

\end{document}